\documentclass[11pt,a4paper]{amsart}

\usepackage{amsmath,amsthm,amssymb}
\usepackage{tikz}
\usepackage{caption}

\usetikzlibrary{arrows,decorations.pathmorphing,decorations.pathreplacing}

\tikzset{vertex/.style={circle,fill=black,inner sep=1pt,outer sep=2pt},
         mvertex/.style={rectangle,draw=black,thick,inner sep=2pt,outer sep=2pt},
         tvertex/.style={inner sep=1pt,font=\scriptsize},
         unvertex/.style={circle,fill=white,draw=white,inner sep=1pt},
         fill1/.style={fill=black!20,draw=black!20},
         fill2/.style={fill=black!40,draw=black!40},
         fill12/.style={fill=black!60,draw=black!60},
         >=stealth',
         leadsto/.style={-angle 90,decorate,decoration=snake,very thick},
         cut/.style={decorate,decoration=saw,very thick}}


\newtheorem{theorem}{Theorem}[section]

\newtheorem{corollary}[theorem]{Corollary}

\newtheorem{lemma}[theorem]{Lemma}
\newtheorem{proposition}[theorem]{Proposition}
\theoremstyle{definition}
\newtheorem{definition}[theorem]{Definition}
\theoremstyle{definition}
\newtheorem{example}[theorem]{Example}
\theoremstyle{remark}
\newtheorem*{remark}{Remark}


\newcommand{\add}{{\operatorname{add}\nolimits}}

\newcommand{\End}{\operatorname{End}\nolimits}
\newcommand{\Ext}{\operatorname{Ext}\nolimits}
\newcommand{\Hom}{\operatorname{Hom}\nolimits}

\renewcommand{\mod}{\operatorname{mod}\nolimits}

\newcommand{\op}{{\operatorname{op}\nolimits}}

\newcommand{\replacevertex}[3][fill=white,draw=white]
 {
  \node at #2 [#1,circle,inner sep=1pt] {};
  \node #2 at #2 #3;
 }
\newcommand{\etalchar}[1]{$^{#1}$}


\newcommand{\cA}{ \ensuremath{ {\mathcal A} } }
\newcommand{\cB}{ \ensuremath{ {\mathcal B} } }
\newcommand{\cC}{ \ensuremath{ {\mathcal C} } }
\newcommand{\cD}{ \ensuremath{ {\mathcal D} } }

\numberwithin{figure}{section}

\title[Finding a CTO for a representation finite CTA]{Finding a cluster-tilting object for a representation finite cluster-tilted algebra}
\date{\today}

\author[Bertani-{\O}kland]{Marco Angel Bertani-{\O}kland}
\author[Oppermann]{Steffen Oppermann}
\author[Wr{\aa}lsen]{Anette Wr{\aa}lsen}

\address{Institutt for matematiske fag\\ NTNU\\ 7491 Trondheim\\ Norway}
\email{Marco.Tepetla@math.ntnu.no}
\email{Steffen.Oppermann@math.ntnu.no}
\email{Anette.Wralsen@math.ntnu.no}

\begin{document}

\begin{abstract}
We provide a technique to find a cluster-tilting object having a given cluster-tilted algebra as endomorphism ring in the finite type case.
\end{abstract}

\maketitle

\section{Introduction}

The cluster category of a finite dimensional hereditary algebra $H$ over an algebraically closed field $k$ was first introduced in \cite{BMRRT}. It provides a categorical model for the cluster algebras of Fomin and Zelevinsky \cite{FZ}, as well as a generalization of the classical tilting theory. The first link from cluster algebras to tilting theory was given in \cite{MRZ} with \emph{decorated representations of quivers}, and later the cluster category was defined as a factor category of the derived category of $H$. 

In the cluster category we have a special class of objects, namely the \emph{cluster-tilting objects}. They induce the \emph{cluster-tilted algebras}, which have been extensively studied (see for instance \cite{BMR1}, \cite{BMR2}, \cite{BMR3}). The topic of this paper is the relationship between cluster-tilting objects and cluster-tilted algebras in the finite type case. More specifically we will study the distribution of cluster-tilting objects in the Auslander Reiten quiver (AR-quiver for short) of the cluster category and the quivers of cluster-tilted algebras.

Given a cluster-tilting object in a cluster category $\mathcal C$ of finite type and its distribution within the AR-quiver of $\cC$, it is easy to read off the quiver with relations of the cluster-tilted algebra induced by it. We present a technique to solve the opposite problem - namely, given the quiver with relations of a cluster-tilted algebra of finite type, how can we find a cluster-tilting object having it as its endomorphism ring? The technique uses a correspondence between subfactor categories of the cluster category and subquivers of $Q$. Along the way we obtain a new proof of the classification of quivers of cluster tilted algebras of type $D_n$ (first obtained in \cite{Dagfinn}).

\section{Background} \label{section_preliminaries}

\subsection{Cluster categories and cluster-tilting objects}

Let $H$ be a finite dimensional hereditary algebra $H$ over an algebraically closed field $k$. We then define its cluster category as an orbit category of its bounded derived category, namely the category $\mathcal C_H = D^b(H)/F$, where $F$ is the composition of the automorphism $\tau^{-1}$, the inverse of the AR-translate of $D^b(H)$, with $[1]$, the shift functor. The objects of $\cC$ are orbits of the objects of $\cD$, while $\Hom_\cC(A,B)=\oplus_i \Hom_\cD(A,F^i B)$ (see \cite{BMRRT} for more details). Since the indecomposable objects of $D^b(H)$ are equivalent to stalk complexes, we can consider them shifts of objects in $\mod H$, and thus all morphisms in $\mathcal C_H$ are induced by morphisms and extensions in $\mod H$.

A \emph{cluster-tilting object} in $\mathcal C_H$ is an object $T$ such that $\Ext^1_\cC(T,X)=0$ if and only if $X$ is in $\add T$ for any object $X \in \cC$. The cluster-tilting objects coincide with the \emph{maximal rigid objects}, that is, objects that are rigid (i.e. with no self-extensions) and are maximal with this property (\cite{BMRRT}). Throughout this paper we will assume that $T$ is basic. These objects can be considered to be generalizations of tilting modules, and all such objects are in fact induced by tilting modules in $\mod H'$ for some hereditary algebra $H'$ derived equivalent to $H$ (\cite[3.3]{BMRRT}). In particular they have exactly $n$ nonisomorphic indecomposable summands, where $n$ is the number of nonisomorphic simple modules of $H$.

In this paper we will only be concerned with cluster categories of finite type. This means that $H$ is Morita equivalent to the path algebra $kQ$ where $k$ is an algebraically closed field and $Q$ is a simply laced Dynkin quiver. In these cases the combinatorics of the AR-quivers of $\mod H$ is very well-known, and we will rely heavily on this.

\subsection{Cluster-tilted algebras of finite type}

A \emph{cluster-tilted algebra} is the endomorphism algebra $B = \End_\cC(T)^{\op}$ of a cluster-tilting object $T$ in $\cC$. By \cite{BMR2} we know that $B$ is of finite representation type if and only if $H$ is of finite representation type. If $H$ is of Dynkin type $\Delta$, we say that $B$ is cluster-tilted of type $\Delta$. 

Finite type cluster-tilted algebras are (up to Morita equivalence) determined uniquely by their quiver by \cite{BMR1}. Furthermore, under our assumptions they are of the form $kQ/I$, where $Q$ is a finite quiver and $I$ some finitely generated admissible ideal in the path algebra $kQ$. 

If the elements of $I$ are linear combinations $k_1 \rho_1 + \cdots + k_n \rho_n$ of paths $\rho_i$ in $Q$, all starting and ending at the same vertex, such that each $k_i$ is non-zero in $k$, they are called \emph{relations}. A \emph{zero-relation} is a relation such that $n=1$. If $n=2$, we call it a \emph{commutativity-relation}. A relation $r$ is called \emph{minimal} if we have that whenever $r=\sum \alpha_i \rho_i \beta_i$, where $\rho_i$ is a relation for every $i$, then there exists an index $j$ such that both $\alpha_j$ and $\beta_j$ are scalars. Furthermore, whenever there is an arrow $b \to a$, a path from $a$ to $b$ is called \emph{shortest} if it contains no proper subpath which is a cycle and if the full subquiver generated by the induced oriented cycle contains no further arrows. We also say that two paths from $j$ to $i$ are {\em disjoint} if they have no common vertices except $j$ and $i$, and we say that two disjoint paths from $j$ to $i$ are {\em disconnected} if the full subquiver generated by the paths contains no further arrows except possibly an arrow from $i$ to $j$. More details on this can be found in \cite{BMR1}. Then the following theorem sums up many properties of $I$:

\begin{theorem}\cite{BMR1}  \label{theorem.relations}
   Let Q be a finite quiver, $k$ an algebraically closed field and $I$ an ideal in the path algebra $kQ$, such that $B=kQ/I$ is a cluster-tilted algebra of finite representation type, and let $a$ and $b$ be vertices of $Q$.
   \begin{itemize}
    \item Assume that there is an arrow $b\to a$. Then there are at most two shortest paths from $a$ to $b$.
	   \begin{itemize}    		
	    \item If there is exactly one, this is a minimal zero-relation.
	    \item If there are two, $r$ and $s$, then $r$ and $s$ are not zero in $B$, they are disconnected, and there is a minimal commutativity-relation $r + \lambda s$ for some $\lambda\neq 0$ in $k$.
	   \end{itemize}
     \item Up to multiplication by non-zero elements of $k$, there are no other zero-relations or commutativity-relations.
     \item The ideal $I$ is generated by minimal zero-relations and minimal commutativity-relations.		
   \end{itemize}
  \end{theorem}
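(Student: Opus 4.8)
The plan is to carry out everything inside the cluster category $\cC=\cC_H$, reading off both $Q$ and $I$ from the way the indecomposable summands of the cluster-tilting object $T=\bigoplus_{i=1}^{n}T_i$ (with $B=\End_\cC(T)^{\op}$) sit inside the AR-quiver of $\cC$. The first step is the translation into $\cC$: by the work of Buan--Marsh--Reiten the functor $\Hom_\cC(T,-)$ induces an equivalence $\cC/\add\tau T\to\mod B$ under which $P_i:=\Hom_\cC(T,T_i)$ are the indecomposable projectives, and since $T$ is rigid and $\cC$ is $2$-Calabi--Yau (so $\tau\cong[1]$) every morphism $T_i\to\tau T_k$ vanishes, hence no nonzero map $T_i\to T_j$ factors through $\add\tau T$ and $\Hom_B(P_i,P_j)\cong\Hom_\cC(T_i,T_j)$. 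Consequently an arrow $b\to a$ of $Q$ is an irreducible morphism between summands of $T$, a path from $a$ to $b$ is a nonzero composite $T_a\to T_{c_1}\to\cdots\to T_b$ of irreducible morphisms in $\add T$, and a relation is a linear combination of such composites that vanishes in $B$. (That $Q$ has no loops and no $2$-cycles is already known, cf.\ \cite{BMRRT}.) So the entire statement reduces to understanding, for a fixed arrow $b\to a$, the composites $T_a\to\cdots\to T_b$ in $\cC$ and the linear relations among them.

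For that I would use that $\cC$ has finite type, so its AR-quiver is a known quotient of $\mathbb{Z}\Delta$: the dimensions $\dim_k\Hom_\cC(X,Y)$ and the shapes of the AR-triangles are all explicit. Fixing an irreducible $h\colon T_b\to T_a$, one analyses the minimal right $\add T$-approximations around $T_a$ (the relevant AR- and exchange triangles in $\cC$) to determine which summands of $T$ can begin a nonzero path towards $T_b$. The key geometric observation is that, because $\Hom_\cC(T_a,T_b)$ is small ($T$ rigid, $\cC$ being $2$-Calabi--Yau), a nonzero composite of irreducible maps in $\add T$ cannot ``wind around'' a cycle of $Q$: any path that is not \emph{shortest} in the sense defined above is forced to vanish already in $\cC$, or to factor through $\add\tau T$ and hence die in $B$. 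This simultaneously yields the first bullet: there are at most two shortest paths from $a$ to $b$; if there is exactly one, the composite along it is zero in $B$ (a zero-relation), minimal because no proper subpath forms a cycle with the arrow $b\to a$; if there are two, say $r$ and $s$, then a rigidity/approximation argument shows neither composite factors through $\add\tau T$, so both are nonzero in $B$, while the small dimension of $\Hom_\cC(T_a,T_b)$ forces a dependence $r+\lambda s=0$ with $\lambda\neq0$, giving the commutativity-relation, again minimal. Disconnectedness of $r$ and $s$ falls out of the same analysis: an extra arrow strictly inside the cycle they span would force an additional summand of $T$ into a forbidden position of the AR-quiver, contradicting rigidity, so only a return arrow $a\to b$ is permitted.

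The second bullet is then immediate: by the translation, any zero- or commutativity-relation is a linear dependence among composites $T_a\to\cdots\to T_b$ for some arrow $b\to a$, and the previous paragraph has exhausted these. For the third bullet I would argue by a dimension count. Let $I'$ be the ideal generated by the minimal zero- and commutativity-relations found above; then $I'\subseteq I$, so there is a surjection $kQ/I'\twoheadrightarrow kQ/I=B$. On the other hand $\dim_k B=\sum_{i,j}\dim_k\Hom_B(P_i,P_j)=\sum_{i,j}\dim_k\Hom_\cC(T_i,T_j)$ is computable directly in $\cC$, and one checks that modulo $I'$ the only surviving paths are the ``short'' ones counted by these $\Hom$-spaces, so that $kQ/I'$ already has this dimension; hence $I'=I$.

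I expect the main obstacle to be the geometric core of the second paragraph: ruling out long, winding nonzero paths and pinning down the exact number (zero, one, or two) of shortest paths joining $a$ to $b$. In types $A_n$ and $D_n$ this is a finite combinatorial analysis of how $n$ mutually non-crossing summands can be placed inside $\mathbb{Z}\Delta/G$; the subtle points are making the argument uniform enough that type $E$ does not degenerate into a separate machine verification, and the careful bookkeeping needed to certify that the composite along a shortest path does \emph{not} already vanish in $\cC$ before one divides out $\add\tau T$ --- which is exactly what distinguishes a genuine commutativity-relation from a pair of independent zero-relations.
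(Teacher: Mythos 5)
A preliminary remark: the paper you are working from does not prove Theorem~\ref{theorem.relations} at all --- it is quoted from \cite{BMR1} and used as a black box --- so the only meaningful comparison is with the proof in that reference. The framework you set up in your first paragraph is the correct one and is indeed the one used there: pass to the cluster category via the equivalence $\cC/\add\tau T\simeq\mod B$, identify $\Hom_B(P_i,P_j)$ with $\Hom_\cC(T_i,T_j)$ using rigidity and the $2$-Calabi--Yau property, and read off arrows and relations from composites of irreducible maps in $\add T$ inside the AR-quiver of a finite-type cluster category.

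There is, however, a genuine gap: the entire mathematical content of the theorem is concentrated in your second paragraph, and there it is described rather than proved. The assertion that a nonzero composite of irreducible maps in $\add T$ ``cannot wind around a cycle of $Q$'' and that every non-shortest path ``is forced to vanish already in $\cC$, or to factor through $\add\tau T$'' is essentially a restatement of the conclusion that $I$ is generated by the listed relations; no mechanism is supplied that yields the bound of at most two shortest paths, nor the non-vanishing in $B$ of the two composites $r$ and $s$ when there are two. The appeal to $\Hom_\cC(T_a,T_b)$ being ``small'' is also not automatic: already for the path algebra of $D_4$ one has indecomposables $M,N$ with $\dim_k\Hom(M,N)=2$, so in types $D$ and $E$ the linear dependence $r+\lambda s=0$ with $\lambda\neq 0$ requires an actual computation of (the relevant part of) $\Hom_\cC(T_a,T_b)$, not just finiteness of type. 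Likewise, the dimension count offered for the third bullet presupposes exactly what is to be shown (``one checks that modulo $I'$ the only surviving paths are the short ones''). In short, the translation step is correct and matches \cite{BMR1}, but the combinatorial core --- which in \cite{BMR1} occupies most of that paper and proceeds by a careful analysis of approximations and of subfactor categories obtained by deleting summands --- is missing, as you yourself acknowledge in your closing paragraph.
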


\subsection{Realizing smaller cluster categories inside bigger ones} \label{lokalisering}

Throughout this section we fix a hereditary algebra $H$ and an indecomposable rigid object $M$ in $\cD$.

Several people have noted a class of subfactor categories of the cluster category that are equivalent to smaller cluster categories (smaller here meaning that they are induced by a hereditary algebra with fewer isoclasses of simples). See for instance \cite{BMR3}, \cite{CK} or \cite{IY}. The latter paper treats this for a more general setup, namely $2$-CY triangulated categories with a cluster-tilting object. We thus have the following proposition:

\begin{proposition} \label{proposition.aperp}
Let $\mathcal{C}$ be the cluster category of a hereditary algebra $H$, and let $M \in \mathcal{C}$ be indecomposable and rigid. Then
\[ M^{\perp}_{\mathcal{C}} = \{X \in \mathcal{C} \mid \Hom_{\mathcal{C}}(X, M[1]) = 0 \} / (M), \]
(the category obtained from the subcategory of $\cC$ consisting of all objects not having extensions with $M$ by killing maps factoring through $M$) is equivalent to the cluster category $\mathcal{C}_{H'}$ of a hereditary algebra $H'$ with exactly one fewer isoclasses of simple objects than $H$.

If $M$ is a shift of an indecomposable projective module $He$, then $M^{\perp}_{\mathcal{C}}$ is the cluster category of $H/HeH$.
\end{proposition}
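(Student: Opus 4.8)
The plan is to reduce to the case where $M$ is an indecomposable projective module, and then to realise $M^{\perp}_{\mathcal{C}}$ as an Iyama--Yoshino subfactor attached to a summand of a cluster-tilting object and recognise it from its endomorphism data. First, if $M=P[m]$ is a shift of an indecomposable projective, the shift is a triangle autoequivalence of $\mathcal{C}$ carrying $M$ to $P$ and hence restricting to an equivalence $M^{\perp}_{\mathcal{C}}\xrightarrow{\ \sim\ }P^{\perp}_{\mathcal{C}}$, so we may take $m=0$. For a general indecomposable rigid $M$ (we are in the finite type setting) I would use the standard fact that any indecomposable object of $D^{b}(H)$ becomes, after a suitable composite of reflection functors, i.e.\ a derived equivalence $D^{b}(H)\simeq D^{b}(H')$ with $H'$ the path algebra of another orientation of the underlying Dynkin diagram, and a shift, an indecomposable projective $H'$-module. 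Such an equivalence commutes with the Serre functor, hence with $F=\tau^{-1}[1]$, so it descends to a triangle equivalence $\mathcal{C}_{H}\simeq\mathcal{C}_{H'}$ which identifies $M^{\perp}_{\mathcal{C}_{H}}$ with the corresponding perpendicular category in $\mathcal{C}_{H'}$. Combining this with the remark about the shift, it suffices to treat $M=He$ for a primitive idempotent $e$ of $H$.

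So assume $M=He$; it is a direct summand of the cluster-tilting object $T=H$ of $\mathcal{C}$, and I set $\bar T=H(1-e)$. By \cite{IY}, the subfactor $M^{\perp}_{\mathcal{C}}$ carries a natural structure of a $2$-Calabi--Yau triangulated category in which the image of $\bar T$ is a cluster-tilting object with $n-1$ indecomposable summands. I would then compute its endomorphism algebra. Since $H$ is projective over itself, $\Ext^{1}_{H}(H,\tau^{-1}H)=0$, which forces all the summands of $\Hom_{\mathcal{C}}(H,H)=\bigoplus_{i}\Hom_{D^{b}(H)}(H,F^{i}H)$ with $i\neq 0$ to vanish, so $\End_{\mathcal{C}}(H)^{\op}\cong H$. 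Hence $\End_{\mathcal{C}}(\bar T)^{\op}\cong(1-e)H(1-e)$, the maps between copies of $\bar T$ factoring through $\add M$ form the ideal $(1-e)(HeH)(1-e)$, and therefore
\[
 \End_{M^{\perp}_{\mathcal{C}}}(\bar T)^{\op}\;\cong\;(1-e)H(1-e)\big/(1-e)(HeH)(1-e)\;\cong\;H/HeH,
\]
because, writing $H=kQ$, each of these algebras is spanned by the paths of $Q$ avoiding the vertex $e$; this is the path algebra of the full subquiver of $Q$ on the vertices $\neq e$, in particular hereditary, with $n-1$ isoclasses of simple modules. Finally I would invoke the recognition theorem of Keller and Reiten: an algebraic $2$-Calabi--Yau triangulated category possessing a cluster-tilting object whose endomorphism algebra is hereditary is triangle equivalent to the cluster category of that algebra. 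Since $M^{\perp}_{\mathcal{C}}$ is a subfactor of the algebraic category $\mathcal{C}$, this applies and yields $M^{\perp}_{\mathcal{C}}\simeq\mathcal{C}_{H/HeH}$, which proves both assertions.

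The endomorphism computation is routine once one knows $\End_{\mathcal{C}}(H)^{\op}\cong H$, and the recognition theorem is used as a black box. The main obstacle is the reduction in the first paragraph: one must verify carefully that an indecomposable rigid object really can be carried to an indecomposable projective by an equivalence of derived categories, and that forming the perpendicular category is compatible with passing to a derived-equivalent hereditary algebra. For the second assertion only the harmless observation about the shift functor is needed, so there the reduction is immediate.
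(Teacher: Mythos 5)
The paper does not actually prove this proposition; it is imported from the literature (the surrounding text points to \cite{BMR3}, \cite{CK} and above all \cite{IY}, which supplies the $2$-Calabi--Yau subquotient structure and the bijection of cluster-tilting objects). Your argument essentially reconstructs the standard proof of the quoted statement: Iyama--Yoshino reduction to obtain a $2$-CY triangulated category in which $\bar T=H(1-e)$ is cluster-tilting, the computation $\End_{M^{\perp}_{\cC}}(\bar T)^{\op}\cong (1-e)H(1-e)/(1-e)HeH(1-e)\cong H/HeH$ (which is correct, including the vanishing of the $F$-twisted Hom-components because $H$ is projective), and the Keller--Reiten recognition theorem for algebraic $2$-CY categories with a cluster-tilting object having hereditary endomorphism algebra. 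In the representation-finite setting this is a complete and correct proof of both assertions.

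Two caveats. First, your reduction of an arbitrary indecomposable rigid $M$ to a shifted projective via reflection functors is valid only in Dynkin type (more generally, only for transjective $M$): over a tame or wild hereditary algebra there are regular exceptional modules, which are rigid indecomposables of $\cC$ lying on no complete slice, hence not carried to projectives by any composite of reflection functors and shifts. Since the proposition is stated for an arbitrary hereditary algebra, your proof covers only the case the paper actually uses; for the general statement one must work directly on the subquotient (exhibit a cluster-tilting object of $M^{\perp}_{\cC}$ whose endomorphism algebra in the quotient is hereditary without first moving $M$ to a projective, as is done in \cite{BMR3} and \cite{IY}). Second, applying the recognition theorem requires $M^{\perp}_{\cC}$ to be \emph{algebraic}; this does not follow formally from being a subfactor of an algebraic category, but from realizing the Iyama--Yoshino quotient as the stable category of a suitable Frobenius exact subcategory of a model for $\cC$ --- a point that deserves at least a sentence or a precise reference.
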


This concrete realization of $\mathcal C_{H'}$ in $\mathcal C_H$ is fundamental for this paper. We give an example below:

\begin{example}
Let $H$ be of Dynkin type $A_6$, and let $M$ be the object of $\cC_H$ indicated in the following drawing of the AR-quiver of $\cC$ (the dashed lines indicate which objects are identified under $F$): 

\[ \scalebox{.8}{ \begin{tikzpicture}[scale=.5,yscale=-1]
 \fill [fill1] (0,0) -- (.5,.5) -- (3,3) -- (4,2) -- (5,3) -- (8,0) -- (9,1) -- (9,7) -- (5,3) -- (4,4) -- (3,3) -- (0,6) -- cycle;
 \foreach \x in {0,...,4}
  \foreach \y in {1,3,5}
   \node (\y-\x) at (\x*2,\y) [vertex] {};
 \foreach \x in {0,...,4}
  \foreach \y in {2,4,6}
   \node (\y-\x) at (\x*2+1,\y) [vertex] {};
 \replacevertex[fill1]{(3-2)}{[tvertex] {$M$}}
 \foreach \xa/\xb in {0/1,1/2,2/3,3/4}
  \foreach \ya/\yb in {1/2,3/2,3/4,5/4,5/6}
   {
    \draw [->] (\ya-\xa) -- (\yb-\xa);
    \draw [->] (\yb-\xa) -- (\ya-\xb);
   }
 \foreach \x in {0,...,4}
  \foreach \ya/\yb in {1/2,3/2,3/4,5/4,5/6}
   {
    \draw [->] (\ya-\x) -- (\yb-\x);
   }  
 \draw [dashed] (0,.5) -- (0,6.5); 
 \draw [dashed] (9,.5) -- (9,6.5);
\end{tikzpicture} } \]

The objects that are shaded here are the objects \emph{not} in $M^{\perp}_{\mathcal{C}}$. It is not hard to see that the remaining objects (modulo all maps factoring through $M$) form a cluster category of type $A_2\coprod A_3$.
\end{example}

We then have the following useful corollary:

\begin{corollary}[{\cite[4.9]{IY}}] \label{corollary.sametilt}
Let $M$ be a rigid indecomposable object in the cluster category $\mathcal{C}$ of $H$. Then there is a bijection between cluster-tilting objects in $\mathcal{C}$ containing $M$ and cluster-tilting objects in $\mathcal{C}_{H'}$.
\end{corollary}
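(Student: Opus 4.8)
The plan is to exploit Proposition \ref{proposition.aperp}, which identifies $M^\perp_{\mathcal C}$ with the cluster category $\mathcal C_{H'}$, and to show that the operation ``discard the summand $M$'' gives the desired bijection. First I would set up the two maps. Given a cluster-tilting object $T$ in $\mathcal C$ with $M \in \add T$, write $T = M \oplus \overline{T}$; I claim that the image $\pi(\overline{T})$ of $\overline{T}$ under the projection functor $\pi\colon \{X \mid \Hom_{\mathcal C}(X,M[1])=0\} \to M^\perp_{\mathcal C}$ is a cluster-tilting object in $M^\perp_{\mathcal C} \simeq \mathcal C_{H'}$. Conversely, given a cluster-tilting object $U$ in $M^\perp_{\mathcal C}$, lift each indecomposable summand to an object of $\mathcal C$ (well-defined on objects since $\pi$ is the identity on the relevant indecomposables modulo $M$), add $M$ back, and call the result $\Phi(U)$; I would check $\Phi(U)$ is cluster-tilting in $\mathcal C$. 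Then I verify these two assignments are mutually inverse, which is essentially bookkeeping once the two claims above are established.

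The heart of the argument is therefore the rigidity/maximality transfer in both directions, and here the key input is that $\Ext^1$ in the subfactor category agrees with $\Ext^1$ in $\mathcal C$ for objects in $M^\perp_{\mathcal C}$ that have no extensions with $M$ — this is part of what makes $M^\perp_{\mathcal C}$ a $2$-Calabi--Yau category with the induced structure, and it is exactly the content behind Proposition \ref{proposition.aperp} (via \cite{IY}). Concretely: for $X, Y \in \{Z \mid \Hom_{\mathcal C}(Z, M[1]) = 0\}$ one has $\Ext^1_{M^\perp}(\pi X, \pi Y) \cong \Ext^1_{\mathcal C}(X,Y)$, because maps factoring through $M$ contribute nothing to $\Ext^1$ (any extension of $Y$ by $X$ realized through $M$ would force a map $X \to M[1]$ or $M \to Y[1]$, both of which vanish by the defining condition and by $2$-CY duality). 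Granting this, rigidity of $\overline T$ in $\mathcal C$ immediately gives rigidity of $\pi(\overline T)$ in $M^\perp_{\mathcal C}$, and the converse direction is identical.

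For maximality I would argue by the numerical criterion: cluster-tilting objects in a cluster category of type coming from a hereditary algebra with $m$ simples have exactly $m$ nonisomorphic indecomposable summands (stated in the background section). Since $H'$ has one fewer simple than $H$, a rigid object in $\mathcal C$ containing $M$ is maximal iff it has $n$ summands iff its image in $M^\perp_{\mathcal C}$ has $n-1$ summands iff that image is maximal rigid, i.e. cluster-tilting, in $\mathcal C_{H'}$. This closes both implications simultaneously. Alternatively, and perhaps more cleanly, one can use the ``if and only if'' characterization directly: $\Ext^1_{\mathcal C}(M \oplus \overline T, X) = 0$ for all $X$ iff $X \in \add(M \oplus \overline T)$; restricting to $X$ with $\Hom_{\mathcal C}(X,M[1]) = 0$ and passing through the $\Ext$-isomorphism above turns this into the cluster-tilting condition for $\pi(\overline T)$ in $M^\perp_{\mathcal C}$.

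The step I expect to be the main obstacle is the clean identification of $\Ext^1$ between the subfactor category and $\mathcal C$, together with checking that the lift $\Phi$ is well-defined and really does land among rigid objects \emph{containing} $M$ (one must rule out that adding $M$ back creates a self-extension, i.e.\ that $\Ext^1_{\mathcal C}(M, \widetilde U) = 0$ and $\Ext^1_{\mathcal C}(\widetilde U, M) = 0$ for the lift $\widetilde U$ of $U$ — but this is exactly guaranteed by $\widetilde U$ lying in $M^\perp_{\mathcal C}$, i.e.\ $\Hom_{\mathcal C}(\widetilde U, M[1]) = 0$, combined with $2$-CY duality $\Ext^1_{\mathcal C}(M,\widetilde U) \cong D\Ext^1_{\mathcal C}(\widetilde U, M)$). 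Since all of this machinery is available in \cite{IY} in the general $2$-CY setting, the proof is in the end a short deduction; I would simply cite \cite[4.9]{IY} for the general statement and indicate the specialization, which is what the corollary label already suggests.
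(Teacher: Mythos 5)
The paper gives no proof of this corollary at all---it is quoted verbatim from \cite[4.9]{IY}---so your closing instinct to simply cite that result and indicate the specialization is exactly what the authors do. Your sketch of the underlying argument (project out $M$, lift back and re-attach $M$, transfer rigidity via the identification $\Ext^1_{M^\perp_{\mathcal{C}}}(\pi X,\pi Y)\cong\Ext^1_{\mathcal{C}}(X,Y)$, and settle maximality by counting indecomposable summands against the number of simples) is consistent with the Iyama--Yoshino proof and is sound in this hereditary setting, with the one caveat that the $\Ext^1$-identification is a genuine lemma of \cite{IY} (the shift in the subfactor category is not the restriction of $[1]$) rather than the quick ``maps through $M$ contribute nothing'' argument you give.
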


In particular this means that given a cluster-tilting object $T$ and removing any summand $T'$ will give an object $T/T'$ that is again a cluster-tilting object in a cluster category (though now possibly disconnected) that is equivalent to a subfactor category of the original cluster category.

\section{Subfactor categories of Dynkin type $A$}

In this section we will focus on the quivers of cluster-tilted algebras of type $A$ and how these always can be assumed to be induced by cluster-tilting objects living in a domain of the cluster category corresponding to $\mod kQ_m$ where $Q_m$ is $A_m$ with linear orientation. We will also show that the same happens for certain subquivers of quivers of cluster-tilted algebras of type $D$.

\subsection{Quivers of type $A$ and triangles in cluster categories}

By \cite{BV} the class of quivers of cluster-tilted algebras of type $A$ can be described as follows (equivalent classifications can be found in \cite{Seven} and implicitly in \cite{otherCCS}).

\begin{itemize}
\item [$\bullet$] all non-trivial cycles are oriented and of length $3$
\item [$\bullet$] a vertex has at most four neighbours
\item [$\bullet$] if a vertex has four neighbours, then two of its adjacent arrows belong to one $3$-cycle, and the other two belong to another $3$-cycle
\item [$\bullet$] if a vertex has exactly three neighbours, then two of its adjacent arrows belong to a $3$-cycle, and the third arrow does not belong to any $3$-cycle
\end{itemize}

In particular there are no non-oriented cycles in such a quiver (and thus no multiple arrows). Actually such a quiver is always a full connected subquiver of the infinite quiver $Q_{\cA}$ drawn below:

\begin{minipage}{\textwidth}\label{Q_A}
\[ \begin{tikzpicture}[xscale=.8,yscale=.4,yscale=-1]
 \node (P1) at (13.6,2) [vertex] {};
 \node (P2) at (11.6,4) [vertex] {};
 \node (P3) at (15.6,4) [vertex] {};
 \node (P4) at (10.6,5.5) [vertex] {};
 \node (P5) at (12.6,5.5) [vertex] {};
 \node (P6) at (14.6,5.5) [vertex] {};
 \node (P7) at (16.6,5.5) [vertex] {};
 \node (P8) at (10.1,6.5) [vertex] {};
 \node (P9) at (11.1,6.5) [vertex] {};
 \node (P10) at (12.1,6.5) [vertex] {};
 \node (P11) at (13.1,6.5) [vertex] {};
 \node (P12) at (14.1,6.5) [vertex] {};
 \node (P13) at (15.1,6.5) [vertex] {};
 \node (P14) at (16.1,6.5) [vertex] {};
 \node (P15) at (17.1,6.5) [vertex] {};
 \draw [->] (P2) -- (P1);
 \draw [->] (P1) -- (P3);
 \draw [->] (P3) -- (P2);
 \draw [->] (P4) -- (P2);
 \draw [->] (P2) -- (P5);
 \draw [->] (P6) -- (P3);
 \draw [->] (P3) -- (P7);
 \draw [->] (P5) -- (P4);
 \draw [->] (P7) -- (P6);
 \draw [->] (P8) -- (P4);
 \draw [->] (P4) -- (P9);
 \draw [->] (P10) -- (P5);
 \draw [->] (P5) -- (P11);
 \draw [->] (P12) -- (P6);
 \draw [->] (P6) -- (P13);
 \draw [->] (P14) -- (P7);
 \draw [->] (P7) -- (P15);
 \draw [->] (P15) -- (P14);
 \draw [->] (P13) -- (P12);
 \draw [->] (P11) -- (P10);
 \draw [->] (P9) -- (P8);
 \draw [dotted] (P8) -- +(-.3,.7);
 \draw [dotted] (P9) -- +(.3,.7);
 \draw [dotted] (P10) -- +(-.3,.7);
 \draw [dotted] (P11) -- +(.3,.7);
 \draw [dotted] (P12) -- +(-.3,.7);
 \draw [dotted] (P13) -- +(.3,.7);
 \draw [dotted] (P14) -- +(-.3,.7);
 \draw [dotted] (P15) -- +(.3,.7);
\end{tikzpicture} \]\captionof{figure}{} \label{figure.an}
\end{minipage}
where the composition of any two arrows on a $3$-cycle is zero. In such a quiver we will call a vertex a \emph{connecting vertex} (this notation has been introduced in \cite{Dagfinn}) if
\begin{itemize}
 \item[$\bullet$]there are at most two arrows adjacent with it, and
 \item[$\bullet$]whenever there are two arrows adjacent with it, the vertex is on a $3$-cycle.
\end{itemize} 
For instance, in the quiver
\[ \begin{tikzpicture}[yscale=-1]
 \node (1) at (0,0) [inner sep=1pt] {1};
 \node (2) at (1,0) [inner sep=1pt] {2};
 \node (3) at (2,0) [inner sep=1pt] {3};
 \node (4) at (2.5,-.86) [inner sep=1pt] {4};
 \node (5) at (3,0) [inner sep=1pt] {5};
 \draw [->] (1) -- (2);
 \draw [->] (2) -- (3);
 \draw [->] (3) -- (4);
 \draw [->] (4) -- (5);
 \draw [->] (5) -- (3);
\end{tikzpicture} \]
the vertices 1, 4 and 5 are connecting vertices, while the vertices 2 and 3 are not.

Next we define what we mean by an $\mathcal A$-triangle in a quiver. It is simply the quiver of a cluster-tilted algebra of Dynkin type $A_m$ with exactly one connecting vertex marked. It is easy to see that the quiver of any cluster-tilted algebra of type $A$ has at least one connecting vertex, so we just have to choose one. In particular we can always assume that the marked connecting vertex corresponds to the top vertex in the picture of $Q_{\cA}$ in Figure \ref{Q_A}.

Now we move on to a class of subcategories of cluster categories that we will refer to as \emph{triangles of order $m$ in $\cC$}. As already indicated they correspond to domains in the AR-quiver of $\cC$ corresponding to $\mod kQ_m$. Thus any rigid object in such a triangle is induced by a partial tilting module over $kQ_m$, and if the object is maximal within the triangle it corresponds to a tilting module. Below are two examples of triangles of order $2$ and $4$ in a cluster category of type $D_7$:

\[ \scalebox{.8}{ \begin{tikzpicture}[scale=.5,yscale=-1]
 \foreach \x in {0,...,7}
  \foreach \y in {1,2,4,6}
   \node (\y-\x) at (\x*2,\y) [vertex] {};
 \foreach \x in {0,...,6}
  \foreach \y in {3,5,7}
   \node (\y-\x) at (\x*2+1,\y) [vertex] {};
 \foreach \xa/\xb in {0/1,1/2,2/3,3/4,4/5,5/6,6/7}
  \foreach \ya/\yb in {1/3,2/3,4/3,4/5,6/5,6/7}
   {
    \draw [->] (\ya-\xa) -- (\yb-\xa);
    \draw [->] (\yb-\xa) -- (\ya-\xb);
   }
 \draw [dashed] (0,.5) -- (0,7.5); 
 \draw [dashed] (14,.5) -- (14,7.5);
 \draw [rounded corners=5pt] (2,5.5) -- (3.9,7.4) -- node [below] {$\Delta_1$} (.1,7.4) -- cycle;
 \draw [rounded corners=5pt] (8,3.5) -- (11.9,7.4) -- node [below] {$\Delta_2$} (4.1,7.4) -- cycle;
\end{tikzpicture} } \] 

We will also denote the object in a triangle $\Delta$ that corresponds to the projective-injective module over $kQ_m$ by $\Pi(\Delta)$. Since every maximal rigid object in $\Delta$ corresponds to a tilting module over $kQ_m$, it will always have $\Pi(\Delta)$ as a summand.

Next we will show that any cluster-tilting object of type $A_m$ is induced by a tilting module over $\mod kQ_m$.

\begin{lemma}\label{oppe og nede}
Every cluster-tilting object $T$ in the cluster category $\mathcal C_m$ of type $A_m$ has at least two summands in the outmost $\tau$-orbit in the AR-quiver of $\mathcal C_m$.
\end{lemma}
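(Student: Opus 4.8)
The plan is to pass to the standard combinatorial model of $\mathcal C_m$ as the category of diagonals of a convex polygon. Recall (see \cite{otherCCS}, or \cite{BMRRT}) that $\mathcal C_m$ is equivalent to the category whose indecomposable objects are the diagonals of a convex $(m+3)$-gon $P$, in which $\tau$ acts as rotation of $P$ by one vertex, and in which the basic cluster-tilting objects correspond precisely to the triangulations of $P$ --- the indecomposable summands of a cluster-tilting object being the $m$ diagonals of the corresponding triangulation. Under this dictionary the outmost $\tau$-orbit is the set of \emph{short} diagonals of $P$, namely those joining two vertices at boundary-distance $2$: they form a single $\tau$-orbit (of size $m+3$), they are exactly the diagonals that cut off a single triangle (an \emph{ear}) of the polygon, and they are the ones lying on the boundary of the AR-quiver of $\mathcal C_m$.

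So it suffices to prove that every triangulation of $P$ contains at least two short diagonals. We may assume $m\ge 2$, so that $P$ has at least five vertices; then two distinct ears of $P$ determine two distinct short diagonals (the short diagonal of the ear at a vertex $v$ joins the two neighbours of $v$, and two such can coincide only when $P$ is a quadrilateral). Hence it is enough to show that every triangulation of $P$ has at least two ears.

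To see this, consider the dual graph of the triangulation: one node per triangle, with an edge joining two triangles whenever they share a diagonal. It has $m+1$ nodes and $m$ edges, and is connected because $P$ is a triangulated disc; therefore it is a tree, and since $m+1\ge 2$ it has at least two leaves. A triangle is a leaf of this tree exactly when precisely one of its sides is a diagonal, i.e.\ when two of its sides lie on the boundary of $P$ --- that is, exactly when it is an ear. So the triangulation has at least two ears, hence at least two short diagonals, and therefore $T$ has at least two indecomposable summands in the outmost $\tau$-orbit.

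The only real work is at the translation step: one must verify carefully that what is called the ``outmost $\tau$-orbit'' in the AR-quiver of $\mathcal C_m$ really is the $\tau$-orbit of short diagonals in the polygon model (and dispatch the small cases, e.g.\ $m=2$, where every diagonal of $P$ is already short). After that, the combinatorial heart --- a triangulated convex polygon has at least two ears --- is the elementary dual-tree argument above. One could instead stay inside the AR-quiver: if $T$ had at most one summand in the outmost orbit, then all but at most one of the $m+3$ rim objects $X$ would satisfy $\Ext^1_{\mathcal C}(T,X)\neq 0$, and one bounds how many rim objects a single summand of $T$ can ``block'', but making that count sharp is fiddlier than using the dual tree.
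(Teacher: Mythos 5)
Your proof is correct, but it takes a genuinely different route from the one in the paper. The paper argues by induction on $m$ using the subfactor-category machinery of Section~\ref{lokalisering}: if $T$ had a summand $T_*$ off the outmost $\tau$-orbit, then $T_*^{\perp}\simeq \mathcal C_{k-1}\times\mathcal C_{m-k}$, the inductive hypothesis gives each factor two summands in \emph{its} outer orbit, and since the two objects directly below $T_*$ extend each other, at least one such summand per factor must land in the bottom row of $\mathcal C_m$. You instead invoke the Caldero--Chapoton--Schiffler polygon model, identify the outmost $\tau$-orbit with the short diagonals of the $(m+3)$-gon, and reduce the statement to the classical fact that a triangulated convex polygon has at least two ears, proved via the dual tree having at least two leaves. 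Your combinatorial core is clean and complete (including the check that distinct ears give distinct short diagonals for $m\ge 2$, and the degenerate case $m=1$, which the paper also implicitly excludes); the cost is that you must import and verify the dictionary between the AR-quiver and the polygon model, an equivalence the paper otherwise only mentions in passing, whereas the paper's induction reuses exactly the tools (Corollary~\ref{corollary.sametilt} and the shape of $T_*^{\perp}$) that recur throughout the rest of the argument. Both proofs are valid; yours is more self-contained combinatorially, the paper's is more uniform with its surrounding methods.
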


\begin{proof}
Clearly this is the case for $\mathcal C_2$.

Now assume that the claim holds for any $m'<m$. Since $m$ can be assumed to be at least $3$, either the result holds or $T$ must have at least one indecomposable summand $T_*$ that is not in the outer $\tau$-orbit of the AR-quiver. We then know that $T_*^{\perp}$ looks as follows in $\mathcal C_m$: \\
\begin{minipage}{\textwidth}
\[ \scalebox{.8} { \begin{tikzpicture}[scale=.5,yscale=-1]
 \fill [fill1] (0,.5) -- (.5,.5) -- (5,5) -- (9.5,.5) -- (10,.5) -- (10,7.5) -- (9.5,7.5) -- (5,3) -- (.5,7.5) -- (0,7.5) -- cycle;
 \foreach \x in {0,...,5}
  \foreach \y in {1,3,5,7}
   \node (\y-\x) at (\x*2,\y) [vertex] {};
 \foreach \x in {0,...,4}
  \foreach \y in {2,4,6}
   \node (\y-\x) at (\x*2+1,\y) [vertex] {};
 \replacevertex[fill1]{(4-2)}{[tvertex] {$T_*$}}
 \foreach \xa/\xb in {0/1,1/2,2/3,3/4,4/5}
  \foreach \ya/\yb in {1/2,3/4,5/4,7/6}
   {
    \draw [->] (\ya-\xa) -- (\yb-\xa);
    \draw [->] (\yb-\xa) -- (\ya-\xb);
   }
 \foreach \xa/\xb in {0/1,1/2,2/3,3/4,4/5}
  \foreach \ya/\yb in {3/2,5/6}
   {
    \draw [thick,loosely dotted] (\ya-\xa) -- (\yb-\xa);
    \draw [thick,loosely dotted] (\yb-\xa) -- (\ya-\xb);
   }
 \draw [dashed] (0,.5) -- (0,7.5); 
 \draw [dashed] (10,.5) -- (10,7.5);
 \draw [decorate,decoration=brace] (1.5,.5) -- node [above] {$C_{m-k}$} (8.5,.5);
 \draw [decorate,decoration={brace,mirror}] (1.5,7.5) -- node [below] {$C_{k-1}$} (8.5,7.5);
\end{tikzpicture} } \]
\captionof{figure}{} \label{figure.oppe_og_nede}
\end{minipage}
$T_*^{\perp}$ is equivalent to $\mathcal C_{k-1} \times \mathcal C_{m-k}$, and we can, by Corollary~\ref{corollary.sametilt}, write $T=T_* \oplus T_1 \oplus T_2$ where $T_1$ is a cluster-tilting object in $\mathcal C_{k-1}$ and $T_2$ is a cluster-tilting object in $\mathcal C_{m-k}$.

The domain of $\mathcal C_{k-1}$ is the objects below $T_*$ in the above figure. By assumption every cluster-tilting object in $\mathcal C_{k-1}$ must have at least two summands in the top $\tau$-orbit of its AR-quiver. We note that this cannot be the two objects directly below $T_*$ since there is a non-zero extension between them. Thus $T_1$ must have at least one summand in the bottom row of the AR-quiver of $\mathcal C_{k-1}$ and thus of $\mathcal C_m$. A similar argument holds for $T_2$ and $\mathcal C_{m-k}$, and so we are done.
\end{proof}

\begin{lemma}\label{alltid linear}
Every cluster-tilting object $T$ in the cluster category $\cC_m$ is induced by a tilting module over $kQ_m$, where $Q_m$ is of type $A_m$ with linear orientation.
\end{lemma}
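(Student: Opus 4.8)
The plan is to exhibit, inside $\cC_m$, a triangle of order $m$ that contains $T$. By the discussion preceding the lemma this suffices: a rigid object lying in a triangle of order $m$ is induced by a partial tilting module over $kQ_m$, and if it is maximal inside the triangle it is induced by a tilting module.

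First I would use Lemma~\ref{oppe og nede} to pick an indecomposable summand $X$ of $T$ lying in the outmost $\tau$-orbit of the AR-quiver of $\cC_m$, and set
\[ \Delta \;=\; \{\, Y \in \cC_m \mid \Hom_{\cC_m}(Y, X[1]) = 0 \,\}, \]
the full subcategory on the indecomposables having no extension with $X$; note that $X \in \Delta$, since $X$ is rigid. The key claim --- and the step I expect to be the main obstacle --- is that, \emph{precisely because $X$ lies in the outmost $\tau$-orbit}, this $\Delta$ is a triangle of order $m$, i.e.\ a domain in the AR-quiver of $\cC_m$ isomorphic to the AR-quiver of $\mod kQ_m$ for $Q_m$ the linearly oriented $A_m$. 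In the language of Proposition~\ref{proposition.aperp} the point is that for such an extremal $X$ the category $X^{\perp}$ is connected of type $A_{m-1}$ --- in contrast to the interior situation of Figure~\ref{figure.oppe_og_nede}, where it breaks up into a product of two smaller cluster categories --- and that adjoining $X$ back to it caps it off to an order-$m$ triangle rather than letting the region wrap around the AR-quiver. I would prove this either by a direct inspection of the (completely explicit) shape of the AR-quiver of $\cC_m$ in type $A$ near its outmost orbit, or by an induction on $m$ that keeps track of how $X^{\perp} \simeq \cC_{m-1}$ re-embeds into $\cC_m$.

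Granting the claim, the remainder is immediate. Since $T$ is rigid and $X$ is one of its summands, every indecomposable summand $Y$ of $T$ satisfies $\Hom_{\cC_m}(Y, X[1]) = \Ext^1_{\cC_m}(Y, X) = 0$, hence $Y \in \Delta$, hence $T \in \add \Delta$. As a cluster-tilting object of $\cC_m$, $T$ has exactly $m$ nonisomorphic indecomposable summands and no self-extensions, so it is a maximal rigid object contained in the triangle $\Delta$ and is therefore induced by a tilting module over $kQ_m$, as required.
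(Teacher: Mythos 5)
Your argument is correct and is essentially the proof given in the paper: there, too, one takes a summand $X$ of $T$ in the outmost $\tau$-orbit and observes that the objects having non-zero extensions with $X$ form a slice $kQ_m[1]$ for some embedding of $kQ_m$ into $\cC_m$ --- equivalently, that your $\Delta$ is a triangle of order $m$ --- so that every summand of $T$ lies in the copy of $\mod kQ_m$. The paper treats this key combinatorial fact as a ``very simple observation'' rather than proving it, so your identification of it as the one step needing verification matches the actual content of the paper's proof.
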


\begin{proof}
This result follows from the very simple observation that the objects that have non-zero extensions with an object in the outmost row of the AR-quiver can be seen as $kQ_m[1]$ for some embedding of $kQ_m$ into $\mathcal{C}_m$. Since none of these objects can be summands of $T$, the result follows.
\end{proof}

The next proposition tells us which summands of a cluster-tilting object $T$ can induce connecting vertices in the quiver of $\End(T)^{\op}$:

\begin{proposition}
Let $T$ be a cluster-tilting object in the cluster category $\mathcal C_m$ of type $A_m$. Then an indecomposable summand $T_*$ of $T$ induces a connecting vertex in the quiver $Q$ of $\End(T)^{\op}$ if and only if it is in the outmost $\tau$-orbit in the AR-quiver of $\mathcal C_m$.
\end{proposition}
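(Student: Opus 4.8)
The plan is to prove both implications using the structure of cluster-tilting objects in $\mathcal C_m$ established in Lemmas~\ref{oppe og nede} and~\ref{alltid linear}, together with an explicit knowledge of the AR-quiver of $\mathcal C_m$. By Lemma~\ref{alltid linear}, $T$ is induced by a tilting module over $kQ_m$ with $Q_m$ linearly oriented, so we may picture all indecomposable summands of $T$ inside a fixed triangle domain $\Delta$ of order $m$ in the AR-quiver of $\mathcal C_m$; the indecomposable objects in this picture are the intervals $[i,j]$ (in the usual description of $\mod kQ_m$), and the outermost $\tau$-orbit consists exactly of the ``short'' intervals of length $1$ sitting along the top row. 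The quiver $Q$ of $\End_\mathcal{C}(T)^{\op}$ has one vertex per summand, and we read off its arrows and $3$-cycles from the relative position of these intervals; this is precisely the combinatorics that identifies $Q$ as a full connected subquiver of $Q_\mathcal{A}$ from Figure~\ref{figure.an}.

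\textbf{The ``if'' direction.} Suppose $T_*$ lies in the outermost $\tau$-orbit, so it corresponds to a simple $kQ_m$-module, i.e. an interval of length $1$. I would argue directly that the vertex of $Q$ corresponding to $T_*$ has at most two adjacent arrows and, if it has two, lies on a $3$-cycle. Any arrow into or out of the vertex of $T_*$ comes from an irreducible map in $\add T$ between $T_*$ and a neighbouring summand; because $T_*$ is a length-$1$ interval at the top, the only summands that can be irreducibly connected to it in $\add T$ are intervals sharing one of its two endpoints. A short case analysis (using that $T$ is a tilting module, so its summands are pairwise Ext-orthogonal and their Hom-spaces are one-dimensional where nonzero) shows that from each side $T_*$ receives/sends at most one arrow, and when both sides contribute, the two arrows close up into an oriented $3$-cycle with the summand that is the ``sum'' interval. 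Hence $T_*$ induces a connecting vertex.

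\textbf{The ``only if'' direction (the main obstacle).} This is where the real work lies. Suppose $T_*$ is an indecomposable summand that is \emph{not} in the outermost $\tau$-orbit; we must produce either three neighbours not fitting the $3$-cycle pattern, or a non-oriented/longer configuration forbidden for connecting vertices. The natural tool is the reduction $T_*^{\perp}_{\mathcal C}\simeq \mathcal C_{k-1}\times\mathcal C_{m-k}$ from Proposition~\ref{proposition.aperp} and Corollary~\ref{corollary.sametilt}, exactly as in the proof of Lemma~\ref{oppe og nede}: writing $T=T_*\oplus T_1\oplus T_2$, Lemma~\ref{oppe og nede} forces $T_1$ (and $T_2$) to have a summand in the row of its AR-quiver ``facing'' $T_*$, and one checks that such a summand is irreducibly connected to $T_*$ in $\add T$. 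Doing this on both sides of $T_*$ already gives two neighbours from ``below''; the point is that since $T_*$ is interior, it also has irreducible maps coming from ``above'' (from the larger interval obtained by extending $T_*$ at one end, which must appear as a summand of $T$ by maximality/Ext-vanishing arguments), producing a third neighbour, and the three arrows at $T_*$ cannot all lie on a single $3$-cycle because the three neighbours lie in genuinely different positions of the AR-quiver. Verifying this last incompatibility cleanly — i.e. that an interior summand always acquires at least three neighbours and that these never arrange themselves into the permitted ``two-in-a-$3$-cycle, one free'' shape of a connecting vertex — is the delicate step, and I expect it to require a careful but finite inspection of the possible local pictures of $T$ around $T_*$ inside the triangle $\Delta$, handling separately the cases where $T_*$ is adjacent to the top row and where it is strictly interior.
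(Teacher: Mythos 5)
Your ``if'' direction is fine and matches the paper's (the paper just asserts it is clear from the AR-quiver; your interval-based case analysis is a legitimate way to make that precise). The problem is in the ``only if'' direction, where your central intermediate claim is false: an interior summand does \emph{not} always acquire at least three neighbours in $Q$. Take $\mathcal C_3$ and a cluster-tilting object whose quiver is linearly oriented $A_3$: the middle vertex corresponds to the unique interior summand, and it has exactly two neighbours. It fails to be a connecting vertex not because it has too many neighbours but because its two adjacent arrows do not lie on a $3$-cycle. Your proposed route --- produce a third neighbour ``from above'' by an Ext-vanishing/maximality argument and then rule out the permitted local shapes by inspection --- therefore starts from a wrong premise, and the step you yourself flag as delicate (the ``finite inspection of local pictures'') is exactly where the argument would collapse.

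The paper avoids all of this with a connectedness argument that you set up but do not exploit. Since $T_*$ is interior, $T_*^{\perp}\simeq\mathcal C_{k-1}\times\mathcal C_{m-k}$ with each factor containing at least one summand of $T$, and every map in $\add T$ between the two factors must pass through $T_*$; hence $T_*$ is the only bridge between the two blocks of vertices of $Q$. If $T_*$ had a single adjacent arrow, one block would be disconnected from the rest of $Q$. If $T_*$ had exactly two adjacent arrows lying on a $3$-cycle, the other two vertices of that cycle would both have to sit in the same factor (an arrow between them cannot factor through $T_*$), so again $T_*$ would fail to reach the other block and $Q$ would be disconnected. Both connecting-vertex configurations are thus excluded, with no case analysis of local pictures needed. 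I would recommend replacing your third-neighbour argument with this; your use of Lemma~\ref{oppe og nede} to locate summands of $T_1$ and $T_2$ is then unnecessary --- all that is needed is that each factor is nonempty.
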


\begin{proof}
First, from the AR-quiver of $\mathcal C_m$ it is clear that an object in the outmost $\tau$-orbit in the AR-quiver will induce a connecting vertex in $Q$. 

Next, assume that an indecomposable summand $T_*$ of $T$ that is not in the outmost row of the AR-quiver of $\mathcal C_m$ induces a connecting vertex. Consider Figure~\ref{figure.oppe_og_nede}. Since $T_*$ is not on the outmost row, $T_*^{\perp}$ decomposes into two non-connected subcategories containing $k-1$ and $n-k$ summands of $T$ respectively.

If there is only one other indecomposable summand of $T$ with an irreducible map to or from $T_*$, this would mean that $Q$ is a disconnected quiver since there must be at least one summand of $T$ in each subcategory. Thus this cannot happen. 

Hence assume that $T_*$ corresponds to a vertex on a $3$-cycle. In this case the other summands in the cycle would have to be either both in $\mathcal C_{k-1}$ or both in $\mathcal C_{m-k}$, since there is no non-zero map completing the triangle otherwise. Again this would disconnect the quiver, and so it cannot happen.
\end{proof}

The following corollary is an immediate consequence of this proposition.

\begin{corollary}\label{triangelkorrespondanse}
Every $\mathcal A$-triangle and every maximal rigid object in a triangle of order $m$ in the cluster category $\mathcal{C}$ of type $D_n$ is induced by a tilting module over $\mod kQ_m$ where $Q_m$ is of type $A_m$ with linear orientation. Thus every $\mathcal A$-triangle corresponds to some maximal rigid object in a triangle in $\mathcal C$, and every maximal rigid object in a triangle in $\mathcal C$ induces an $\mathcal A$-triangle.
\end{corollary}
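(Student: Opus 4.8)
The plan is to obtain the corollary by combining Lemma~\ref{alltid linear} and the preceding Proposition with the dictionary set up in the discussion of triangles of order $m$. First I would make that dictionary precise. A triangle $\Delta$ of order $m$ in $\cC$ is a domain in the AR-quiver of $\cC$ isomorphic to the AR-quiver of $\mod kQ_m$; adjoining the $m$ indecomposables that have nonzero extensions with its outmost $\tau$-orbit (the ``$kQ_m[1]$'' objects appearing in the proof of Lemma~\ref{alltid linear}) realizes, inside $\cC$, a copy of the cluster category $\cC_m$ of type $A_m$, with $\Delta$ playing the role of $\mod kQ_m\subseteq\cC_m$. Under this identification a maximal rigid object of $\Delta$ — an object with $m$ summands, all lying in $\Delta$ and with no self-extensions — is the same thing as a cluster-tilting object of $\cC_m$ all of whose summands are modules, which by Lemma~\ref{alltid linear} is exactly a tilting module over $kQ_m$. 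This already gives the first clause of the corollary and shows that $\Pi(\Delta)$ is always a summand; moreover such an object then has a well-defined type-$A_m$ cluster-tilted endomorphism algebra, with some quiver $Q$.

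For the correspondence I would argue both directions by invoking the Proposition at the distinguished vertex. Given a maximal rigid object $T$ in a triangle $\Delta$, mark in $Q$ the vertex corresponding to $\Pi(\Delta)$; since the projective-injective $kQ_m$-module sits in the outmost $\tau$-orbit of the ambient $\cC_m$, the Proposition shows this vertex is a connecting vertex, so $Q$ with this mark is an $\mathcal A$-triangle. Conversely, given an $\mathcal A$-triangle — the quiver of a type-$A_m$ cluster-tilted algebra with a marked connecting vertex, which we may take to be the top vertex of $Q_{\cA}$ as recorded before the statement — its underlying algebra is of the form $\End(T)^{\op}$ for a cluster-tilting object $T$ of $\cC_m$, and Lemma~\ref{alltid linear} lets us take $T$ to be a tilting module over $kQ_m$, i.e. a maximal rigid object in the module triangle of $\cC_m$; the top vertex then corresponds to $\Pi(\Delta)$, which by the Proposition is indeed connecting, so everything is consistent. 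The two constructions are mutually inverse — both just pass between a maximal rigid object of a triangle and its endomorphism algebra — and since $\cC_m$ sits as a triangle of order $m$ inside a cluster category of type $D_n$, this yields the asserted correspondence.

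The step I expect to be the main obstacle is the bookkeeping around the distinguished vertex. One must check that, under the identification of a triangle of order $m$ with $\mod kQ_m\subseteq\cC_m$, the object $\Pi(\Delta)$ genuinely lies in the outmost $\tau$-orbit of the ambient $\cC_m$ (so that the Proposition can be applied to it), that this lines up with the ``top vertex of $Q_{\cA}$'' normalization used for $\mathcal A$-triangles, and that the quiver read off from $T$ inside the copy of $\cC_m$ agrees with the corresponding full subquiver of the ambient type-$D_n$ cluster-tilted algebra (which one gets from the realization results of Section~\ref{lokalisering} by peeling off the other summands one at a time via Corollary~\ref{corollary.sametilt}). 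None of this is deep — it is a matter of reading off the completely explicit AR-quiver of linear type $A$ — but it is precisely the glue that promotes the Proposition, which controls one summand at a time, to the full bijective correspondence.
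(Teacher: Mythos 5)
Your derivation is correct and matches the paper's intent: the paper offers no written proof beyond declaring the corollary an immediate consequence of the preceding proposition, and your argument supplies exactly that derivation, combining the proposition (connecting vertices $\leftrightarrow$ summands in the outmost $\tau$-orbit), Lemma~\ref{alltid linear}, and the dictionary between triangles of order $m$ and $\mod kQ_m$ set up before the statement. The ``bookkeeping'' you flag -- arranging the embedding so that the marked connecting vertex lands on $\Pi(\Delta)$ -- is indeed the only point requiring a moment's thought, and it is handled as you suggest by reading off the explicit AR-quiver.
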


\subsection{Subcategories of Dynkin type $A$ in the cluster category of type $D$}

In this subsection we will develop some lemmas which will be used to show the main results of this paper. Throughout this subsection we will assume that $\cC$ is a cluster category of Dynkin type $D$.

From now on let $H = kQ$, where $Q$ is some orientation of $D_n$. Then the AR-quiver of $\mathcal D$ is of the form $\mathbb ZQ$ and can be drawn as follows:
\[ \scalebox{.8}{ \begin{tikzpicture}[scale=.5,yscale=-1]
 \foreach \x in {0,...,7}
  \foreach \y in {1,2,4,8}
   \node (\y-\x) at (\x*2,\y) [vertex] {};
 \foreach \x in {0,...,6}
  \foreach \y in {3,5,7}
   \node (\y-\x) at (\x*2+1,\y) [vertex] {};
 \foreach \xa/\xb in {0/1,1/2,2/3,3/4,4/5,5/6,6/7}
  \foreach \ya/\yb in {1/3,2/3,4/3,4/5,8/7}
   {
    \draw [->] (\ya-\xa) -- (\yb-\xa);
    \draw [->] (\yb-\xa) -- (\ya-\xb);
   }
 \foreach \x in {0,...,14}
  \node at (\x,5.8) {$\vdots$};
 \foreach \y in {1,...,5,7,8}
  {
   \node at (-1,\y) {$\cdots$};
   \node at (15,\y) {$\cdots$};
  }
\end{tikzpicture} } \]
When the AR-quiver of $\mathcal C_H$ is drawn as above, we will refer to the objects in the top two rows of the AR-quiver of $\mathcal D$ as \emph{$\alpha$-objects} and to the remaining objects as \emph{$\beta$-objects}. 

\begin{definition}
Let $A$ be an $\alpha$-object. Then $\phi A$ is the unique other $\alpha$-object which occurs together with $A$ as a summand of the middle term of an almost split triangle.
\end{definition}

This means that $\phi A$ is the $\alpha$-object drawn directly below or above $A$ in the AR-quiver above.
The Ext-supports of the objects in this category are well-known. Assuming that $A$ is an $\alpha$-object in $\mathcal C_H$, we will  illustrate $A^{\perp}_{\mathcal C}$ in the AR-quiver of $\mathcal C_H$ as follows.
\[ \scalebox{.8}{ \begin{tikzpicture}[scale=.5,yscale=-1]
 \foreach \x in {2,4,6}
  \fill [fill1] (\x*2,1) circle (.5);
 \fill [fill2] (6,1) circle (.5);
 \foreach \x in {1,5}
  \fill [fill1] (\x*2,2) circle (.5);
 \fill [fill1] (0,.5) arc (-90:90:.5);
 \fill [fill1] (14,2.5) arc (90:270:.5);
 \fill [fill1] (0,2.5) -- (4,2.5) -- (0,6.5) -- cycle;
 \fill [fill1] (8,2.5) -- (14,2.5) -- (14,6.5) -- (13,7.5) -- cycle;
 \foreach \x in {0,...,7}
  \foreach \y in {1,2,4,6}
   \node (\y-\x) at (\x*2,\y) [vertex] {};
 \foreach \x in {0,...,6}
  \foreach \y in {3,5,7}
   \node (\y-\x) at (\x*2+1,\y) [vertex] {};
 \replacevertex[fill2]{(1-3)}{[tvertex] {$A$}}
 \foreach \xa/\xb in {0/1,1/2,2/3,3/4,4/5,5/6,6/7}
  \foreach \ya/\yb in {1/3,2/3,4/3,4/5,6/5,6/7}
   {
    \draw [->] (\ya-\xa) -- (\yb-\xa);
    \draw [->] (\yb-\xa) -- (\ya-\xb);
   }
 \draw [dashed] (0,.5) -- (0,7.5); 
 \draw [dashed] (14,.5) -- (14,7.5);
\end{tikzpicture} } \]
Here the dashed lines indicate which objects are identified by the functor $F$.

Assume that $T$ is a cluster-tilting object having $A$ as a summand. By Corollary~\ref{corollary.sametilt} we have the following lemma:

\begin{lemma}\label{An-delen_bevares}
Let $T$ be a cluster-tilting object in $C_H$, and assume that $A$ is a summand in $T$. Then $T = T_1 \oplus T_2$ where $T_1$ is the sum of all indecomposable $\alpha$-objects in $T$ and $T_2$ is the sum of all $\beta$-objects. If $T'$ is the image of $T$ in $A^{\perp}_{\mathcal C}$, then $T' = T_1' \oplus T_2'$ such that $T_1'$ is isomorphic to $T_1/A$ and $T_2'$ is isomorphic to $T_2$.
\end{lemma}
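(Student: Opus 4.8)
The plan is to apply Corollary~\ref{corollary.sametilt} to the rigid indecomposable summand $A$ of $T$, which identifies cluster-tilting objects of $\cC_H$ containing $A$ with cluster-tilting objects of $A^\perp_\cC$; the latter is, by Proposition~\ref{proposition.aperp}, the cluster category $\cC_{H'}$ of a hereditary algebra with one fewer simple. The content of the lemma is then entirely about tracking which row an indecomposable summand of $T$ lands in under the quotient functor $\pi\colon \{X \mid \Hom_\cC(X,A[1])=0\} \to A^\perp_\cC$. So the first step is to recall the explicit picture of $A^\perp_\cC$ drawn just above the statement: the summand $A$ itself and the shaded objects are removed, and the remaining objects, modulo maps through $A$, reassemble (up to the $F$-identifications indicated by the dashed lines) into the AR-quiver of a smaller type-$D$ cluster category, with its own notion of $\alpha$-objects (top two rows) and $\beta$-objects.

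Next I would verify the decomposition $T = T_1 \oplus T_2$: every indecomposable summand of $T$ is an $\alpha$-object or a $\beta$-object by definition of these classes, so grouping them gives $T_1 \oplus T_2$ with $T_1$ the $\alpha$-part (containing $A$) and $T_2$ the $\beta$-part. Since $T$ is basic and $A$ lies in $\add T_1$, we have $T/A = (T_1/A) \oplus T_2$, and Corollary~\ref{corollary.sametilt} tells us $T' := \pi(T/A)$ is a cluster-tilting object of $A^\perp_\cC$. The only thing left is to check that $\pi$ carries $\alpha$-objects of $\cC_H$ (other than $A$ and the removed ones) to $\alpha$-objects of $A^\perp_\cC \simeq \cC_{H'}$, and $\beta$-objects to $\beta$-objects, and that $\pi$ is injective on the summands involved so that $T_1' \cong T_1/A$ and $T_2' \cong T_2$ as claimed. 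Both facts are read off directly from the displayed AR-quiver picture: the two top rows of the big quiver, with the shaded region and $A$ excised, map precisely onto the two top rows of the smaller quiver (the outer $\alpha$-orbit of $\cC_H$ reappears as the outer $\alpha$-orbit of $\cC_{H'}$, reglued around where $A$ sat), and everything strictly below remains strictly below.

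The one point requiring a little care — and the main potential obstacle — is making the identification ``$\alpha$ goes to $\alpha$'' precise rather than merely pictorial: one must confirm that none of the $\beta$-objects of $\cC_H$ surviving in $A^\perp_\cC$ gets promoted into the top two rows of the smaller AR-quiver, and that the quotient by $(A)$ does not collapse two distinct summands of $T/A$ together. For the first, it suffices to note that the shaded region in the picture is exactly $\{X : \Hom_\cC(X,A[1]) \neq 0\} \cup \add A$, and inspecting its shape shows that the objects removed from the top two rows are precisely those lying on the two $\tau$-segments adjacent to $A$; after deletion and regluing, the objects that were in the top two rows and survive are still the top two rows, while $\beta$-objects, being bounded away from the outer orbit by at least one intermediate object that also survives, stay in the interior. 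For the second, $T/A$ is rigid and basic, and the functor $\pi$ restricted to a rigid subcategory is fully faithful on the relevant $\Hom$-spaces (this is part of the Iyama--Yoshida setup underlying Proposition~\ref{proposition.aperp} and Corollary~\ref{corollary.sametilt}), so distinct indecomposable summands stay distinct and $T_1' \cong T_1/A$, $T_2' \cong T_2$. This completes the proof.
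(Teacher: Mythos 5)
Your argument is essentially the paper's: the lemma is stated there as an immediate consequence of Corollary~\ref{corollary.sametilt} together with the displayed picture of $A^{\perp}_{\mathcal C}$, and your first two paragraphs reproduce exactly that, which is all the statement requires. One correction to your elaboration: $A^{\perp}_{\mathcal C}$ is a cluster category of Dynkin type $A_{n-1}$ (as the paper notes immediately after the lemma), \emph{not} a smaller type-$D$ category, so your final paragraph verifying that ``$\alpha$-objects go to $\alpha$-objects of the smaller quiver'' both rests on a misidentification and addresses a claim the lemma does not make --- the statement only asserts $T_1'\cong T_1/A$ and $T_2'\cong T_2$, which follows from the bijection of Corollary~\ref{corollary.sametilt} and the fact that the quotient by $(M)$ changes only morphisms, not objects.
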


The category $\cC' = A^\perp_\cC$ is a cluster category of Dynkin type $A$ by Theorem~\ref{proposition.aperp}. We set $\mathcal B(A) = (\phi A)^{\perp}_{\mathcal C'}$. This category is the cluster category of some product of path algebras of Dynkin type $A$. In particular it must be the intersection of $A^{\perp}_{\mathcal C}$ and $(\phi A)^{\perp}_{\mathcal C}$, and thus it consists of all $\beta$-objects in $A^{\perp}_{\mathcal C}$. From the above figure we see that this is the domain of the cluster category of Dynkin type $A_{n-2}$. 

As demonstrated in the previous section, there are many triangles in $\mathcal C$. We now define two particular triangles $\mathcal B_*(A)$ and $_*{\mathcal B}(A)$ and the subcategory $\cB(A)$.

\begin{definition}
Let $\mathcal C$, $A$ and $A^{\perp}_{\mathcal C}$ be as above. Then we define $\mathcal B_*(A)$ to be the subcategory $\{B \in \mathcal B(A) | \Hom_{\mathcal C}(B,A)=0\}$ and $_*{\mathcal B}(A)$ to be the subcategory $\{B  \in \mathcal B(A) | \Hom_{\mathcal C}(A,B)=0\}$ in $\mathcal C$. We also define $\cB(A) = {\mathcal B}_*(A) \cup _*{\mathcal B}(A)$.
\end{definition}

\noindent
\begin{minipage}{\textwidth}
\[ \scalebox{.8}{ \begin{tikzpicture}[scale=.5,yscale=-1,baseline=-40pt]
 \foreach \x in {2,4,6}
  \fill [fill1] (\x*2,1) circle (.5);
 \foreach \x in {1,5}
  \fill [fill1] (\x*2,2) circle (.5);
 \fill [fill1] (0,.5) arc (-90:90:.5);
 \fill [fill1] (14,2.5) arc (90:270:.5);
 \fill [fill1] (0,2.5) -- (4,2.5) -- (0,6.5) -- cycle;
 \fill [fill1] (8,2.5) -- (14,2.5) -- (14,6.5) -- (13,7.5) -- cycle;
 \foreach \x in {0,...,7}
  \foreach \y in {1,2,4,6}
   \node (\y-\x) at (\x*2,\y) [vertex] {};
 \foreach \x in {0,...,6}
  \foreach \y in {3,5,7}
   \node (\y-\x) at (\x*2+1,\y) [vertex] {};
 \replacevertex{(1-3)}{[tvertex] {$A$}}
 \foreach \xa/\xb in {0/1,1/2,2/3,3/4,4/5,5/6,6/7}
  \foreach \ya/\yb in {1/3,2/3,4/3,4/5,6/5,6/7}
   {
    \draw [->] (\ya-\xa) -- (\yb-\xa);
    \draw [->] (\yb-\xa) -- (\ya-\xb);
   }
 \draw [dashed] (0,.5) -- (0,7.5); 
 \draw [dashed] (14,.5) -- (14,7.5);
 \draw [rounded corners=5pt] (5,2.5) -- (9.8,7.3) -- node [below,pos=.92] {$_*{\mathcal B}$} (.2,7.3) -- cycle;
 \draw [rounded corners=5pt] (7,2.5) -- (12.2,7.7) -- node [below,very near start] {$\mathcal{B}_*$} (1.8,7.7) -- cycle;
\end{tikzpicture} } \]
\captionof{figure}{} \label{figure.Bstar}
\end{minipage}

We then have the following lemma which is easily seen to be true by Figure \ref{figure.Bstar}.

\begin{lemma}\label{triangler1}
If $\Delta$ is a triangle in $\mathcal B(A)$, then any map from $A$ to an object in $\Delta$ and any map from an object in $\Delta$ to $A$ must factor through $\Pi(\Delta)$.
\end{lemma}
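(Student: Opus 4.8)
The plan is to unwind the definitions of $\mathcal{B}_*(A)$, $_*\mathcal{B}(A)$ and $\Pi(\Delta)$, and then to combine the $\Hom$-vanishing that defines $\mathcal{B}(A)$ with the triangle structure inside $\mathcal{B}(A)$. First I would recall that $\mathcal{B}(A)$ sits inside $A^{\perp}_{\mathcal{C}}$ (indeed it consists of all $\beta$-objects there), so by Corollary~\ref{triangelkorrespondanse} any triangle $\Delta$ in $\mathcal{B}(A)$ is a domain of the AR-quiver corresponding to $\mod kQ_m$ with linear orientation, and $\Pi(\Delta)$ is its unique projective-injective object. In the picture of Figure~\ref{figure.Bstar}, $\Pi(\Delta)$ is the top vertex of the triangle $\Delta$, and the defining property we will use is that $\Pi(\Delta)$ is the unique object of $\Delta$ lying on the outmost $\tau$-orbit that is "visible" from $A$ — more precisely, in the $A_m$-domain every object is obtained from $\Pi(\Delta)$ by a chain of irreducible maps, so any morphism into or out of $\Delta$ from an object outside $\Delta$ that is adjacent to the relevant boundary must be mediated by $\Pi(\Delta)$.

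The key step is the following local analysis, which is exactly what Figure~\ref{figure.Bstar} displays. Write $\Delta = \Delta' \cup \Delta''$ according to whether $\Delta$ meets $\mathcal{B}_*(A)$, $_*\mathcal{B}(A)$, or both. Take a map $f\colon A \to B$ with $B \in \Delta$. If $B \in {}_*\mathcal{B}(A)$ then by definition $\Hom_{\mathcal{C}}(A,B) = 0$, so $f = 0$ and there is nothing to prove. So assume $B \notin {}_*\mathcal{B}(A)$, i.e. $B$ lies in the part of $\Delta$ that is "above" $_*\mathcal{B}$ in the figure. Now I claim every nonzero map $A \to B$ factors through $\Pi(\Delta)$: the objects $B$ with $\Hom_{\mathcal{C}}(A,B) \ne 0$ form a "rectangle" (the usual Ext/Hom-support region for $A$ in a cluster category of type $D$, drawn shaded in the figures), and within the $A_m$-domain $\Delta$ the portion of this rectangle is precisely the set of successors of $\Pi(\Delta)$; the map $A \to \Pi(\Delta)$ is nonzero and every map from $A$ into $\Delta$ factors through it because $\Pi(\Delta)$ is the "source" of $\Delta$ as an $A_m$-module category and any path in the AR-quiver from $A$ into $\Delta$ must pass through the unique entry point $\Pi(\Delta)$. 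Dually, for a map $B \to A$ with $B \in \Delta$: if $B \in \mathcal{B}_*(A)$ the map is zero; otherwise $B$ is in the part of $\Delta$ below $\mathcal{B}_*$, the maps $B \to A$ are supported on the successor region of $\Pi(\Delta)$ inside $\Delta$, and every such map factors through $\Pi(\Delta)$ since $\Pi(\Delta)$ is also the sink of $\Delta$ (it is projective-injective), hence every path in the AR-quiver from an object of $\Delta$ to $A$ exits $\Delta$ through $\Pi(\Delta)$.

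Concretely I would justify the factorization through $\Pi(\Delta)$ by the standard "knitting" argument: in the $A_m$-domain $\Delta$, for any $B \in \Delta$ there is a surjection (respectively injection) relating $B$ to $\Pi(\Delta)$ in $\mod kQ_m$, and since all morphisms in $\mathcal{C}$ between these objects and $A$ are induced by morphisms/extensions in the relevant module category (as recalled in Section~\ref{section_preliminaries}), one reads off from the shape of the support region in Figure~\ref{figure.Bstar} that $\Hom_{\mathcal{C}}(A,-)$ and $\Hom_{\mathcal{C}}(-,A)$ restricted to $\Delta$ are "one-dimensional along the boundary" and factor through the corner object $\Pi(\Delta)$. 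This is the content the figure is meant to make evident, so in the write-up I would state it as "easily seen from Figure~\ref{figure.Bstar}" after spelling out which region of $\Delta$ carries nonzero maps to/from $A$.

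The main obstacle is making the phrase "factor through $\Pi(\Delta)$" genuinely rigorous rather than merely plausible from the picture: one must check that the composite $A \to \Pi(\Delta) \to B$ (resp. $B \to \Pi(\Delta) \to A$) actually realizes the given map and not just some map, which requires knowing that $\Hom_{\mathcal{C}}(A,\Pi(\Delta))$ is one-dimensional and that composition with the irreducible maps out of $\Pi(\Delta)$ inside $\Delta$ is injective (resp. surjective) onto $\Hom_{\mathcal{C}}(A,B)$. This follows from the explicit combinatorics of the AR-quiver of a type $A$ module category together with the known Hom-support of $A$ in a type $D$ cluster category, but it is the step where one genuinely uses that $\Delta$ has linear orientation and that $\Pi(\Delta)$ is its projective-injective vertex; everything else is bookkeeping with the definitions of $\mathcal{B}_*(A)$ and $_*\mathcal{B}(A)$.
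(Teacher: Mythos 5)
The paper offers no written proof of this lemma beyond the assertion that it is ``easily seen'' from Figure~\ref{figure.Bstar}, so your proposal is in effect an attempt to supply the intended argument, and its skeleton is the right one: reduce to the case where the target (resp.\ source) lies outside ${}_*\cB(A)$ (resp.\ outside $\cB_*(A)$), identify which objects of $\Delta$ can then receive (resp.\ admit) a nonzero map, and check that the relevant Hom-spaces are one-dimensional with nonzero composites along the boundary of $\Delta$. Your last paragraph correctly isolates that final verification as the only real mathematical content.

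However, several of the justifications you actually give are false and would not survive refereeing. First, it is not true that every object of $\Delta$ is obtained from $\Pi(\Delta)$ by a chain of irreducible maps, nor that $\Pi(\Delta)$ is a ``source'' or a ``sink'' of $\Delta$: in the AR-quiver of $\mod kQ_m$ the projective-injective is neither; its successors in $\Delta$ are exactly the right edge (the injectives) and its predecessors exactly the left edge (the projectives), while the interior objects are neither. Second, it is not true that every path in the AR-quiver from $A$ into $\Delta$ passes through the ``unique entry point'' $\Pi(\Delta)$: such a path can perfectly well enter $\Delta$ through its left edge below the apex; what is true is that any such path has zero composite because its endpoint lies in ${}_*\cB(A)$. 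So the factorization cannot be argued by counting entry points; it must be argued via the support. The correct statement, which your ``set of successors'' remark comes close to, is this: $\{B\in\cB(A)\mid\Hom_{\cC}(A,B)\neq 0\}=\cB_*(A)\setminus{}_*\cB(A)$ is a single sectional path (one diagonal of the AR-quiver), and a triangle $\Delta\subseteq\cB(A)$ meets this diagonal either not at all or exactly in its right edge, whose top vertex is $\Pi(\Delta)$; since along this edge all the spaces $\Hom_{\cC}(A,B)$ are one-dimensional and the composites $A\to\Pi(\Delta)\to B$ of the nonzero map $A\to\Pi(\Delta)$ with the irreducible maps down the edge remain nonzero, every map $A\to B$ with $B\in\Delta$ is a scalar multiple of such a composite. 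The dual argument handles maps into $A$. With those corrections your write-up would be complete and is, as far as one can tell, exactly what the figure is meant to convey.
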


Next we give a lemma that explains some of the relationship between $\cB(A_1)$ and $\cB(A_2)$ for two $\alpha$-objects $A_1$ and $A_2$ in $\cC$.

\begin{lemma} \label{triangler2}
Let $A_1$ and $A_2$ be two $\alpha$-objects such that $A_1 = \tau^kA_2$ or $\tau^k(\phi A_2)$ for some $k$, $0<k<n$. Then $\mathcal B(A_1)\cap\mathcal B(A_2)$ is the union of two disjoint triangles $\Delta_1$ and $\Delta_2$ of order $k-1$ and $n-k-1$, respectively, such that
\begin{itemize}
\item[1)] every map from $A_1$ to $A_2$ factors through $\Pi(\Delta_2)$,
\item[2)] every map $f: X_1 \rightarrow X_2$ where $X_1 \in \Delta_1$ and $X_2 \in \Delta_2$ factors through $A_1$, 
\item[3)] there is a non-zero map $f: \Pi(\Delta_2) \rightarrow \Pi(\Delta_1)$ which factors through $A_2$ and
\item[4)] $\Hom_{\mathcal C_H}(X,A_1)=0$ for $X \in \Delta_2$ and $\Hom_{\mathcal C_H}(A_1,Y)=0$ for any $Y \in \Delta_1$.
\end{itemize}
\end{lemma}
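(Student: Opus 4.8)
The plan is to read every assertion directly off the AR-quiver of $\cC$, in the style of the rest of this section, using the well known shapes of the Hom- and Ext-supports of $\alpha$-objects together with the lemmas already available. First I would dispose of the two cases at once: applying $\phi$ to $A_2$ interchanges the hypotheses $A_1=\tau^kA_2$ and $A_1=\tau^k(\phi A_2)$ and, since $\mathcal B(\phi A)=\mathcal B(A)$, carries the conclusion for one to the conclusion for the other; so it suffices to treat $A_1=\tau^kA_2$, in which case also $\phi A_1=\tau^k(\phi A_2)$. The single general principle I will use throughout is that in a representation-finite category every nonzero morphism is a sum of composites of irreducible maps, so it factors through any object that every path of irreducible maps between its source and target is forced to traverse; I will also remark that the extra $F$-twisted summands of $\Hom_\cC$ contribute nothing in the situations below, for degree reasons.

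The second step is to locate $\Delta_1$ and $\Delta_2$. The assignments $A\mapsto\mathcal B(A)$, $\mathcal B_*$, ${}_*\mathcal B$ and $\Pi$ are all equivariant under the autoequivalence $\tau$, so $\mathcal B(A_2)=\tau^{-k}\mathcal B(A_1)$ as regions of the AR-quiver. Superimposing $\mathcal B(A_1)$ (the type-$A_{n-2}$ region described above) with its $\tau^{-k}$-translate and counting rows and columns, one finds that they overlap in exactly two disjoint triangles: $\Delta_1$, lying in the ${}_*\mathcal B$-part of Figure~\ref{figure.Bstar}, of order $k-1$, and $\Delta_2$, lying in the $\mathcal B_*$-part, of order $n-k-1$. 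As a sanity check the orders add to $n-2$, and the extreme values $k=1$ and $k=n-1$, for which one of $\Delta_1,\Delta_2$ is empty, are precisely the ends of the permitted range $0<k<n$. By construction this placement gives $\Delta_2\subseteq\mathcal B_*(A_1)$ and $\Delta_1\subseteq{}_*\mathcal B(A_1)$, which is assertion 4).

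The remaining three assertions are then path-tracking in the AR-quiver. For 2), an object of $\Delta_1$ and an object of $\Delta_2$ sit on opposite sides of $A_1$, every path of irreducible maps from the first to the second runs through $A_1$, and hence every nonzero $f\colon X_1\to X_2$ with $X_1\in\Delta_1$, $X_2\in\Delta_2$ factors through $A_1$. For 1), the same picture shows that every path from $A_1$ to $A_2=\tau^{-k}A_1$ passes through $\Delta_2$, so every map $A_1\to A_2$ factors through $\Delta_2$; since $\Delta_2$ is a triangle inside $\mathcal B(A_1)$, Lemma~\ref{triangler1} then pushes the factorization through $\Pi(\Delta_2)$. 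Assertion 3) is the mirror statement: every path from $\Pi(\Delta_2)$ to $\Pi(\Delta_1)$ passes through $A_2$, and the evident composite $\Pi(\Delta_2)\to A_2\to\Pi(\Delta_1)$ is, by inspection of the picture, not killed by any mesh relation, hence is nonzero.

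The real work, and the main obstacle, lies in the second step: fixing the exact shape of $\mathcal B(A_1)$ near the fork vertex of $D_n$, correctly accounting for the identification by $F=\tau^{-1}[1]$, and extracting the orders $k-1$ and $n-k-1$ without an off-by-one error. Once the two triangles are placed correctly, verifying 1)--4) is routine bookkeeping in the mesh category.
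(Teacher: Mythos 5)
Your overall strategy coincides with the paper's: reduce to $A_1=\tau^kA_2$, superimpose $\mathcal B(A_1)$ and $\mathcal B(A_2)$ in the AR-quiver to locate $\Delta_1$ and $\Delta_2$, and then verify 1)--4) by tracking morphisms. The differences are in how 2), 3) and 4) are actually certified. For 2) the paper does not argue with paths of irreducible maps through the single object $A_1$ (a path could equally well pass through $\phi A_1$ or other objects in that slice); instead it observes that $\Hom_{\cC''}(X_1,X_2)=0$ in the subfactor category $\cC''=\mathcal B(A_1)$, so any nonzero map in $\cC$ must factor through the killed object $A_1$. For 3) the paper gets existence of a nonzero map $\Delta_2\to\Delta_1$ for free from connectedness of $A_1^{\perp}$ (a cluster category of type $A_{n-1}$ cannot decompose), rather than by checking that a particular composite survives the mesh relations; the factorization through $A_2$, $\Pi(\Delta_2)$ and $\Pi(\Delta_1)$ then follows from 2) applied to $A_2$ together with Lemma~\ref{triangler1}.

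The genuine soft spot is 4). You assert that the placement of $\Delta_2$ inside $\mathcal B_*(A_1)$ holds ``by construction,'' but the construction only exhibits $\Delta_2$ as a component of $\mathcal B(A_1)\cap\mathcal B(A_2)$; since $\Delta_2$ lies inside the forward $\Hom$-cone of $A_1$, the vanishing $\Hom_{\cC}(X,A_1)=0$ for $X\in\Delta_2$ requires ruling out the $F$-twisted summand $\Hom_{\cD}(X,FA_1)\cong D\Hom_{\cD}(A_1,\tau^2X)$, which is exactly the delicate boundary computation you dismiss with ``for degree reasons.'' The paper avoids this computation entirely: a nonzero map $X\to A_1$ with $X\in\Delta_2$ would factor through $\Pi(\Delta_2)$ by Lemma~\ref{triangler1}, and together with the nonzero map $A_1\to\Pi(\Delta_2)$ from 1) this would produce a $2$-cycle in the quiver of a cluster-tilted algebra containing $A_1\oplus\Pi(\Delta_2)$, contradicting \cite[Proposition~3.2]{BMR3}. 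You should either carry out the hammock computation in $\cC$ (not just $\cD$) honestly, or adopt this $2$-cycle argument; as written, the step you label ``routine bookkeeping'' is where the actual content of 4) lives.
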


\begin{proof}
By considering $\mathcal B(A) \cap \mathcal B(\tau^k A)$ for $0<k<n$ we see that $\mathcal B(A_1) \cap \mathcal B(A_2)$ is the union of two disjoint triangles of order $k-1$ and $n-k-1$. The following diagram illustrates this:

\[ \scalebox{.8}{ \begin{tikzpicture}[scale=.5,yscale=-1]
 \foreach \x in {2,4}
  \fill [fill12] (\x*2,1) circle (.5);
 \foreach \x in {1,5}
  \fill [fill12] (\x*2,2) circle (.5);
 \fill [fill2] (6,2) circle (.5);
 \fill [fill1] (12,1) circle (.5);
 \fill [fill12] (0,.5) arc (-90:90:.5);
 \fill [fill12] (14,2.5) arc (90:270:.5);
 \fill [fill1] (0,2.5) -- (4,2.5) -- (0,6.5) -- cycle;
 \fill [fill1] (8,2.5) -- (14,2.5) -- (14,6.5) -- (13,7.5) -- cycle;
 \fill [fill2] (0,2.5) -- (5,7.5) -- (10,2.5) -- cycle;
 \fill [fill12] (0,2.5) -- (2,4.5) -- (4,2.5) -- cycle;
 \fill [fill12] (8,2.5) -- (9,3.5) -- (10,2.5) -- cycle;
 \foreach \x in {0,...,7}
  \foreach \y in {1,2,4,6}
   \node (\y-\x) at (\x*2,\y) [vertex] {};
 \foreach \x in {0,...,6}
  \foreach \y in {3,5,7}
   \node (\y-\x) at (\x*2+1,\y) [vertex] {};
 \replacevertex{(1-3)}{[tvertex] {$A_1$}}
 \replacevertex{(2-6)}{[tvertex] {$A_2$}}
 \foreach \xa/\xb in {0/1,1/2,2/3,3/4,4/5,5/6,6/7}
  \foreach \ya/\yb in {1/3,2/3,4/3,4/5,6/5,6/7}
   {
    \draw [->] (\ya-\xa) -- (\yb-\xa);
    \draw [->] (\yb-\xa) -- (\ya-\xb);
   }
 \draw [dashed] (0,.5) -- (0,7.5);
 \draw [dashed] (14,.5) -- (14,7.5);
 \draw [rounded corners=5pt] (2,5.5) -- (3.9,7.4) -- node [below] {$\Delta_1$} (.1,7.4) -- cycle;
 \draw [rounded corners=5pt] (9,4.5) -- (11.9,7.4) -- node [below] {$\Delta_2$} (6.1,7.4) -- cycle;
\end{tikzpicture} } \]
It is also clear from this that every map from $A_1$ to $A_2$ factors through $\Pi(\Delta_2)$. Furthermore, if there was a non-zero map $f$ from some object in $\Delta_2$ to $A_1$, it would have to factor through $\Pi(\Delta_2)$ by Lemma~\ref{triangler1}. Since $\Hom_{\mathcal C}(A_1, \Pi(\Delta_2))$ is non-zero, these maps would induce a $2$-cycle in the quiver of any cluster-tilted algebra induced by a cluster-tilting object having $A_1$ and $\Pi(\Delta_2)$ as summands. This is a contradiction to a result of Todorov (see \cite[Proposition~3.2]{BMR3}).

Next we argue that every map $f: X_1 \rightarrow X_2$ where $X_1 \in \Delta_1$ and $X_2 \in \Delta_2$ factors through $A_1$. Since $\mathcal B(A_1)$ corresponds to the cluster category $\mathcal C''$ of Dynkin type $A_{n-2}$, it is clear from the figure that $\Hom_{\mathcal C''}(X_1,X_2)=0$ for $X_1 \in \Delta_1$, $X_2 \in \Delta_2$. Thus any non-zero map in $\Hom_{\mathcal C}(X_1,X_2)$ must factor through $A_1$.

Since there is no non-zero map from an object in $\Delta_1$ to an object in $\Delta_2$ in $\mathcal B(A_1)$, there must be a non-zero map $f: X_2' \rightarrow X_1'$ where $X_2' \in \Delta_2$ and $X_1' \in \Delta_1$ (otherwise $A_1^{\perp}$ would be disconnected). But by the previous paragraph this map factors through $A_2$, and by Lemma~\ref{triangler1} it must also factor through $\Pi(\Delta_2)$ and $\Pi(\Delta_1)$.
\end{proof}

\section{The distribution of cluster-tilting objects of type $D_n$}

In this section we will show how to, given the quiver $Q$ of a cluster-tilted algebra of type $D_n$, explicitly find a cluster-tilting object in the cluster category of type $D_n$ inducing it. The goal is to prove the following theorem:

\begin{theorem} \label{main_theorem}
Given the quiver $Q$ of a cluster-tilted algebra of type $D$, it will be of one of the types

\[ \begin{tikzpicture}[scale=.5,yscale=-1]
 \pgftransformshift{\pgfpoint{-2cm}{0}};
 \node at (1,.5) {1)};
 \node (P1) at (2,1) [inner sep=1pt] {$\star$};
 \node (P2) at (3,0) [vertex] {};
 \node (P3) at (3,2) [vertex] {};
 \node (P4) at (4,1) [inner sep=1pt] {$\star$};
 \draw [->] (P1) -- (P2);
 \draw [->] (P1) -- (P3);
 \draw [->] (P2) -- (P4);
 \draw [->] (P3) -- (P4);
 \draw [->] (P4) -- (P1);
 \pgftransformshift{\pgfpoint{6cm}{-5cm}};
 \node at (1,5.5) {2)};
 \node (P1) at (2,6) [inner sep=1pt] {$\star$};
 \node (P2) at (3,5) [vertex] {};
 \node (P3) at (3,7) [vertex] {};
 \node (P4) at (4,6) [inner sep=1pt] {$\star$};
 \draw [->] (P1) -- (P2);
 \draw [->] (P3) -- (P1);
 \draw [->] (P2) -- (P4);
 \draw [->] (P4) -- (P3);
 \pgftransformshift{\pgfpoint{6cm}{-3cm}};
 \node at (1,8.5) {3)};
 \node (P1) at (2,9.4) [vertex] {};
 \node (P2) at (3.2,8.7) [vertex] {};
 \node (P3) at (4.4,9.4) [vertex] {};
 \node (P4) at (4.4,10.8) [vertex] {};
 \node (P5) at (3.2,11.5) [vertex] {};
 \node (P6) at (2,8) [inner sep=1pt] {$\star$};
 \node (P7) at (4.4,8) [inner sep=1pt] {$\star$};
 \node (P8) at (5.6,10.1) [inner sep=1pt] {$\star$};
 \node (P9) at (4.4,12.2) [inner sep=1pt] {$\star$};
 \draw [->] (P2) -- (P1);
 \draw [->] (P3) -- (P2);
 \draw [->] (P4) -- (P3);
 \draw [->] (P5) -- (P4);
 \draw [->] (P1) -- (P6);
 \draw [->] (P6) -- (P2);
 \draw [->] (P2) -- (P7);
 \draw [->] (P7) -- (P3);
 \draw [->] (P3) -- (P8);
 \draw [->] (P8) -- (P4);
 \draw [->] (P4) -- (P9);
 \draw [->] (P9) -- (P5);
 \draw [thick, loosely dotted] (P1) .. controls (1.4,10.45) and (2,11.5) .. (P5);
\end{tikzpicture} \]
where the $\star$-vertices are connecting vertices of possibly empty $\cA$-triangles and all relations are according to Theorem~\ref{theorem.relations}. In particular we have the following: 
\begin{itemize}
\item[a)] If $Q$ is of type 1), it is induced by a cluster-tilting object having exactly two $\alpha$-objects $A$ and $\phi A$ as summands.
\item[b)] If $Q$ is of type 2), it is induced by a cluster-tilting object having exactly two $\alpha$-objects $A_1$ and $A_2$ such that $A_2 \neq \phi A_1$ as summands. Furthermore, if the number of vertices in the two $\cA$-triangles are $n_1$ and $n_2$, then we can choose $A_1$ and $A_2$ such that  $A_1$ is in $\{\tau^{n_1+1}A_2, \tau^{n_1+1}\phi A_2\}$ and $A_2$ is in $\{\tau^{n_2+1}A_1, \tau^{n_2+1}\phi A_1\}$.
\item[c)] If $Q$ is of type 3) it is induced by a cluster-tilting object having more than two $\alpha$-objects as summands. Furthermore the $\alpha$-objects will be distributed within the AR-quiver of $\cC$ according to the number of vertices in the $\cA$-triangles as in b).
\end{itemize}
\end{theorem}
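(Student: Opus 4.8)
The plan is to recover the shape of $Q$ from the positions of the summands of $T$ in the AR-quiver of $\cC$, organising everything by the number $k$ of indecomposable $\alpha$-object summands of $T$. By Lemma~\ref{An-delen_bevares} I write $T=T_\alpha\oplus T_\beta$, the sums of the $\alpha$- and $\beta$-summands, so that $T_\alpha$ has $k$ indecomposable summands. The first step is to check that $k\ge 2$: this uses that $T$ is maximal rigid together with the Ext-supports pictured above to rule out $k=0$, while for $k=1$, say $T_\alpha=A$, Corollary~\ref{corollary.sametilt} makes $T/A$ a cluster-tilting object of $A^{\perp}_{\cC}$, which has type $A_{n-1}$, and then Corollary~\ref{triangelkorrespondanse} and Lemma~\ref{triangler1} force $Q$ to be obtainable from an $\cA$-triangle by attaching a single vertex at a $\Pi$-vertex, hence to be a type-$A$ quiver, contradicting that $B$ is cluster-tilted of type $D$.

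Assume now $k=2$, say $T_\alpha=A_1\oplus A_2$. Since $\cC$ has exactly two $\tau$-orbits of $\alpha$-objects, either $A_2=\phi A_1$ or $A_1\in\{\tau^{\ell}A_2,\tau^{\ell}\phi A_2\}$ for some $0<\ell<n$. In the first case, removing $A_1$ and $A_2$ via Corollary~\ref{corollary.sametilt} leaves a maximal rigid object of $\cB(A_1)$, which is the cluster category of a product of Dynkin type $A$ algebras; its two factors become the two (possibly trivial) $\cA$-triangles, and Lemmas~\ref{triangler1} and~\ref{triangler2} together with Theorem~\ref{theorem.relations} identify the arrows among $A_1$, $A_2$ and the two connecting vertices, showing $Q$ has type 1), which is assertion (a). In the second case I apply Lemma~\ref{triangler2} to split $\cB(A_1)\cap\cB(A_2)$ into the two disjoint triangles $\Delta_1,\Delta_2$ of orders $\ell-1$ and $n-\ell-1$; the $\beta$-summands of $T$ fill these triangles, producing by Corollary~\ref{triangelkorrespondanse} an $\cA$-triangle in each with connecting vertex $\Pi(\Delta_i)$, and parts (1)--(4) of Lemma~\ref{triangler2} pin down the remaining arrows and relations, giving type 2). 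Since a triangle of order $m$ contributes $m$ summands to any maximal rigid object, the order of $\Delta_i$ is the number $n_i$ of vertices of the $i$-th $\cA$-triangle, so $\ell-1=n_1$ and $n-\ell-1=n_2$, turning the exponent $\ell$ of Lemma~\ref{triangler2} into $n_1+1$, which is assertion (b).

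Finally assume $k\ge 3$. Here I would first show that no two summands among $A_1,\dots,A_k$ are exchanged by $\phi$: if $A_2=\phi A_1$ then, as in the $k=2$ case, all remaining summands of $T$ sit inside $\cB(A_1)$, and one checks this leaves no room for a third $\alpha$-object. Hence the $A_i$ occur in a single cyclic order along the two outer $\tau$-orbits, and any two cyclically consecutive ones $A_i,A_{i+1}$ satisfy the hypothesis of Lemma~\ref{triangler2}, which supplies the triangle $\Delta^{(i)}$ lying strictly between them. One then checks the $\Delta^{(i)}$ are pairwise disjoint and that the $\beta$-summands of $T$ distribute among them (their orders summing to $n-k$), so each yields an $\cA$-triangle attached at $\Pi(\Delta^{(i)})$; gluing the local arrow and relation data of Lemma~\ref{triangler2}(1)--(4) around the cycle reproduces the quiver of type 3) with the $\cA$-triangles distributed as in (c). Since by \cite{BMR1} a finite-type cluster-tilted algebra is determined by its quiver and the three shapes are pairwise distinct, the three cases correspond exactly to types 1), 2), 3), completing the classification and (a)--(c). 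The main obstacle is the assembly step for $k\ge 3$: proving that the triangles $\Delta^{(i)}$ produced by the pairwise applications of Lemma~\ref{triangler2} are globally consistent -- that they tile exactly the part of the AR-quiver occupied by $T_\beta$ with no overlap and no gap, and that combining the local arrow descriptions around all the $A_i$ introduces no arrow or relation beyond those in the type 3) picture; the other delicate point is getting a clean proof that $k\le 1$ is impossible rather than checking it by hand in low rank.
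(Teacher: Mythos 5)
Your proposal follows essentially the same route as the paper: decompose $T$ into its $\alpha$- and $\beta$-parts via Lemma~\ref{An-delen_bevares}, rule out fewer than two $\alpha$-summands, and then treat the three cases $T_\alpha=A\oplus\phi A$, $T_\alpha=A_1\oplus A_2$ with $A_2\neq\phi A_1$, and $k\ge 3$ exactly as in Lemmas~\ref{lemma.classify.1}, \ref{lemma.classify.23} and \ref{lemma.classify.4}, using Lemma~\ref{triangler2} and Corollary~\ref{triangelkorrespondanse} to convert the triangles sitting between consecutive $\alpha$-objects into the $\cA$-triangles of $Q$. The one point where you genuinely diverge is the step $k\ge 2$, and it is also your weakest link. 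The paper's Lemma~\ref{lemma.alpha-objects} obtains this by a direct count inside $T_*^{\perp}$ for a $\beta$-summand $T_*$ (showing at most $n-2$ of the $n$ summands can be $\beta$-objects), whereas your $k=1$ argument concludes with ``the quiver would be of type $A$, contradiction,'' which tacitly invokes the disjointness of the mutation classes of $A_n$ and $D_n$ --- a fact of essentially the same depth as the classification being proved and not established anywhere in the paper. A cleaner fix inside your own framework: if $T_\alpha=A$, then the remaining $n-1$ indecomposable summands are $\beta$-objects lying in $A^{\perp}_{\cC}$, hence in $\cB(A)$, which is a cluster category of total rank $n-2$; a rigid object there cannot have $n-1$ pairwise nonisomorphic indecomposable summands. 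Your stated worry about the global assembly of the triangles $\Delta^{(i)}$ for $k\ge 3$ is fair, but the paper's Lemma~\ref{lemma.classify.4} is exactly as terse on this point, so it is not a gap relative to the published argument.
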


\begin{remark}
Note that this is enough to find the distribution of some tilting object $T$ inducing the quiver of a given cluster-tilted algebra, since the number and distribution of the $\alpha$-objects determines the shape of the quiver up to $\cA$-triangles as indicated in the theorem.
\end{remark}

What we will do is to give the correspondence between triangles of order $m$ in $\cC$ and the $\mathcal A$-triangles appearing in the quiver $Q$. We will argue that every $\mathcal A$-triangle is induced by a rigid object in a triangle in the cluster category of type $D_n$ which is maximal within the triangle, and that any such object induces an $\mathcal A$-triangle. First we show that any cluster-tilting object has summands that are $\alpha$-objects:

\begin{lemma} \label{lemma.alpha-objects}
Any cluster-tilting object in the cluster category $\mathcal C$ of type $D_n$ has at least two non-isomorphic $\alpha$-objects as indecomposable summands.
\end{lemma}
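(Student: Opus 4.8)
The plan is to argue by contradiction, supposing that a cluster-tilting object $T$ in $\mathcal C = \mathcal C_{D_n}$ has at most one indecomposable $\alpha$-object among its summands. First I would dispose of the case where $T$ has no $\alpha$-object at all. In that case all $n$ summands of $T$ are $\beta$-objects. But the $\beta$-objects of $\mathcal C$ form (the domain of) a cluster category of type $A_{n-2}$ together with the two ``extra'' outer $\tau$-orbits of the $D_n$-shape; more usefully, one checks directly from the AR-quiver that the full subcategory of $\beta$-objects cannot contain a rigid object with $n$ nonisomorphic indecomposable summands, since a maximal rigid object supported only on $\beta$-objects has too few summands (it is rigid inside something of type $A_{n-2}$, hence has at most $n-2$ summands, and the remaining outer $\beta$-orbits force self-extensions once one tries to add more). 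This already contradicts $|T|=n$.

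Next, suppose $T$ has exactly one indecomposable $\alpha$-object summand, call it $A$. By Corollary~\ref{corollary.sametilt} (and Lemma~\ref{An-delen_bevares}), passing to $A^\perp_{\mathcal C}$ removes $A$ and leaves $T' = T/A$, a cluster-tilting object in $\mathcal C' = A^\perp_{\mathcal C}$, which by Proposition~\ref{proposition.aperp} is a cluster category of Dynkin type $A_{n-1}$ (possibly disconnected). Crucially, by Lemma~\ref{An-delen_bevares} the image $T'$ consists entirely of $\beta$-objects: every remaining summand of $T$ was a $\beta$-object, and these are identified with the $\beta$-objects of $\mathcal C'$. Now I invoke Lemma~\ref{oppe og nede}: every cluster-tilting object in a cluster category of type $A_m$ has at least two summands in the outmost $\tau$-orbit of its AR-quiver. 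But in $\mathcal C' \cong \mathcal C_{A_{n-1}}$, the outmost $\tau$-orbit is exactly the image of the second row of $\alpha$-objects of $\mathcal C$ (the $\phi$-partners, roughly speaking) — i.e. it is \emph{not} a row of $\beta$-objects. Concretely, comparing the AR-quiver of $\mathcal C'$ as drawn via the realization inside $\mathcal C$ with the two figures for $A^\perp_{\mathcal C}$, one sees that the outmost $\tau$-orbit of $\mathcal C'$ meets the $\alpha$-objects of $\mathcal C$, so $T'$ having all summands among $\beta$-objects is incompatible with having two summands on that outmost orbit. (If $\mathcal C'$ is disconnected, apply this to each connected component; each component is of type $A$ with $\geq 2$ vertices once $n$ is large enough, and the small cases $n=4,5$ are checked by hand.) This contradiction shows $T$ has at least two $\alpha$-object summands.

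Finally I would note that the two $\alpha$-object summands are automatically non-isomorphic since $T$ is basic, so no extra argument is needed there; I would also record that the small-rank base cases ($D_4$, and possibly $D_5$) should be verified directly against the explicit list of cluster-tilting objects, to make sure the inductive/structural argument has enough room.

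The main obstacle I expect is the bookkeeping in the second paragraph: one must pin down exactly which $\tau$-orbit of $\mathcal C' = A^\perp_{\mathcal C}$ is ``outmost'' under the identification of Proposition~\ref{proposition.aperp}, and verify that it necessarily involves objects that were $\alpha$-objects in $\mathcal C$ (so that Lemma~\ref{oppe og nede} genuinely bites). This is a picture-chasing argument using Figure~\ref{figure.Bstar} and the AR-quiver of $\mathcal C_{D_n}$, together with the description of $A^\perp_{\mathcal C}$ as type $A_{n-1}$; it is not deep but it is where the geometry of the $D_n$ AR-quiver really enters, and where one has to be careful about the disconnected case and the low-rank exceptions.
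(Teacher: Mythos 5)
Your second case (exactly one $\alpha$-object summand $A$) is a genuinely different and essentially workable route: passing to $A^\perp_{\mathcal C}\cong\mathcal C_{A_{n-1}}$, observing that the outmost $\tau$-orbit there is made up of $\alpha$-objects of $\mathcal C$ (this is consistent with the paper's own identification of $\phi A$ as lying on that orbit and of $\mathcal B(A)$ as exactly the $\beta$-part of $A^\perp_{\mathcal C}$), and then invoking Lemma~\ref{oppe og nede} does yield a contradiction. The paper instead runs a single counting argument: for a $\beta$-summand $T_*$ in row $k$ it analyses $T_*^\perp$ and shows that $T$ can have at most $1+(k-1)+(n-k-2)=n-2$ $\beta$-summands in total, which settles both of your cases at once.

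The genuine gap is in your first case (no $\alpha$-object summands at all). Your justification rests on the claim that the $\beta$-objects of $\mathcal C$ essentially form the domain of a cluster category of type $A_{n-2}$, so that a rigid object supported on them has at most $n-2$ summands. That premise is false: the full subcategory of $\beta$-objects consists of $n-2$ entire $\tau$-orbits of $\mathbb ZD_n$ modulo $F$ (on the order of $n(n-2)$ indecomposables), whereas the type-$A_{n-2}$ domain $\mathcal B(A)$ is only the set of $\beta$-objects lying in $A^\perp_{\mathcal C}\cap(\phi A)^\perp_{\mathcal C}$ for one fixed $\alpha$-object $A$ --- a much smaller triangular region. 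A rigid object all of whose summands are $\beta$-objects need not sit inside any single such triangle (compare the two disjoint triangles $\Delta_1\cup\Delta_2$ of Lemma~\ref{triangler2}), so no bound on its number of summands follows from type-$A_{n-2}$ combinatorics as you state it. The assertion that ``one checks directly from the AR-quiver'' that no rigid object with $n$ $\beta$-summands exists is precisely the content of the lemma in this case and is left unproven; to close it you would need something like the paper's analysis of $T_*^\perp$ for a $\beta$-object $T_*$ (which is also the cleanest way to make your ``outer orbits force self-extensions'' intuition precise). Your Case 2, by contrast, only needs the already-documented structure of $A^\perp_{\mathcal C}$ for an $\alpha$-object $A$.
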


\begin{proof}
Assume that $T$ has at least one indecomposable $\beta$-object $T_*$ as a summand (otherwise we are done). Furthermore assume that $T_*$ is in row $k$ of the AR-quiver of $\cC$ (counted from the bottom), and consider the following figure of $T_*^{\perp}$:
\[ \scalebox{.8} { \begin{tikzpicture}[scale=.5,yscale=-1]
 \fill [fill1] (0,-.5) -- (4.5,-.5) -- (5.5,1.5) -- (9,5) -- (12.5,1.5) -- (13.5,-.5) -- (16,-.5) -- (16,6) -- (14.5,7.5) -- (13.5,7.5) -- (9,3) -- (4.5,7.5) -- (3.5,7.5) -- (1,5) -- (0,6) -- cycle;
 \foreach \x in {0,...,8}
  \foreach \y in {0,1,3,5,7}
   \node (\y-\x) at (\x*2,\y) [vertex] {};
 \foreach \x in {0,...,7}
  \foreach \y in {2,4,6}
   \node (\y-\x) at (\x*2+1,\y) [vertex] {};
 \replacevertex[fill1]{(4-4)}{[tvertex] {$T_*$}}
 \foreach \xa/\xb in {0/1,1/2,2/3,3/4,4/5,5/6,6/7,7/8}
  \foreach \ya/\yb in {0/2,1/2,3/4,5/4,7/6}
   {
    \draw [->] (\ya-\xa) -- (\yb-\xa);
    \draw [->] (\yb-\xa) -- (\ya-\xb);
   }
 \foreach \xa/\xb in {0/1,1/2,2/3,3/4,4/5,5/6,6/7,7/8}
  \foreach \ya/\yb in {3/2,5/6}
   {
    \draw [thick,loosely dotted] (\ya-\xa) -- (\yb-\xa);
    \draw [thick,loosely dotted] (\yb-\xa) -- (\ya-\xb);
   }
 \draw [dashed] (0,-.5) -- (0,7.5); 
 \draw [dashed] (16,-.5) -- (16,7.5);
 \draw [decorate,decoration=brace] (5.5,-.5) -- node [above] {$C'$} (12.5,-.5);
 \draw [decorate,decoration={brace,mirror}] (-.5,7.5) -- node [below] {$C'$} (2.5,7.5);
 \draw [decorate,decoration={brace,mirror}] (15.5,7.5) -- node [below] {$C'$} (16.5,7.5);
 \draw [decorate,decoration={brace,mirror}] (5.5,7.5) -- node [below] {$C_{k-1}$} (12.5,7.5);
\end{tikzpicture} } \]
$T_*^{\perp}$ is equivalent to $\mathcal C_{k-1} \times \mathcal C'$ where $\mathcal C'$ is the cluster category of type $D_{k-1}$, $A_3$ or $A_1 \times A_1$. The part of $\mathcal C'$ that is not directly above $T_*$ is equivalent to a triangle of order $n-k-2$. This follows from the well-known structure of the derived category of type $D_n$ (for instance, one can use the ``knitting'' technique to see this). But this means that $T$ can have at most $k-1$ summands in the subcategory equivalent to $\mathcal C_{k-1}$, and at most $n-k-2$ summands that are $\beta$-objects in $\mathcal C'$ since no summands can be directly above $T_*$. Hence there are at most $1+k-1+n-k-2=n-2$ $\beta$-objects in total, and so there must be at least 2 $\alpha$-objects. 
\end{proof}

We now describe how the indecomposable summands of a cluster-tilting object are distributed in the cluster category. We will show how different choices of $\alpha$-objects give rise to the different types of cluster-tilted algebras. The way we group the $\alpha$-objects is as follows:

\begin{itemize}
\item[(a)] Some $\alpha$-object $A$ and $\phi A$,
\item[(b)] two $\alpha$-objects $A$ and $A'$ such that $A' \neq \phi A$ or,
\item[(c)] three or more $\alpha$-objects.
\end{itemize}

In the following lemmas we will assume that $T$ is a cluster-tilting object in $\mathcal C_H$ inducing a cluster-tilted algebra $B=\End(T)^{\op}$ with quiver $Q_B$. We will also assume that the decomposition $T=T_1 \oplus T_2$ is the decomposition from Corollary~\ref{An-delen_bevares}.

\begin{lemma} \label{lemma.classify.1}
If $T_1 = A \oplus \phi A$, then $Q_B$ is of the following type:
\[ \begin{tikzpicture}[baseline=-5pt,scale=.5,yscale=-1]
 \node (1) at (0,1) [inner sep=1pt] {$\star$};
 \node (2) at (1,0) [vertex] {};
 \node (3) at (1,2) [vertex] {};
 \node (4) at (2,1) [inner sep=1pt] {$\star$};
 \draw [->] (1) -- (2);
 \draw [->] (1) -- (3);
 \draw [->] (2) -- (4);
 \draw [->] (3) -- (4);
 \draw [->] (4) -- (1);
\end{tikzpicture} \]
where the $\star$-vertices are connecting vertices of possibly empty $\mathcal A$-triangles. 
\end{lemma}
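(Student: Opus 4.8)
The plan is to use the concrete realization of $A^{\perp}_{\mathcal C}$ as a cluster category of type $A_{n-2}$ from Proposition~\ref{proposition.aperp}, together with the structure results of the preceding section, and to analyze what happens when we pass from $T$ down to its image in smaller and smaller subfactor categories. By Lemma~\ref{lemma.alpha-objects} a cluster-tilting object always has at least two $\alpha$-objects as summands; the hypothesis $T_1 = A \oplus \phi A$ says these are exactly two, and they form an almost split triangle's middle term. First I would record (from the AR-quiver picture for $A^{\perp}_{\mathcal C}$, as in the figure following Lemma~\ref{An-delen_bevares}) that inside the cluster category $\mathcal C' = A^{\perp}_{\mathcal C}$ of type $A_{n-2}$, the object $\phi A$ is in the \emph{outmost} $\tau$-orbit, and that passing further to $\mathcal B(A) = (\phi A)^{\perp}_{\mathcal C'}$ leaves a cluster category which is exactly $\mathcal B_*(A) \cup \,_*\mathcal B(A)$, i.e.\ the union of two triangles meeting in $\Pi$-objects. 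Thus $T_2$ decomposes as $T_2 = T_2^{(1)} \oplus T_2^{(2)}$, a maximal rigid object in each of these two triangles.

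The heart of the argument is then to read off the arrows of $Q_B$ from the maps in $\mathcal C_H$. For the two $\alpha$-vertices: by Lemma~\ref{oppe og nede} (applied inside $\mathcal C'$ of type $A_{n-2}$) each of the two maximal rigid pieces $T_2^{(i)}$ of the image of $T$ in $\mathcal C'$ must have summands in the relevant outmost orbit, and the connecting vertices of the induced $\mathcal A$-triangles (Corollary~\ref{triangelkorrespondanse}) are precisely the $\Pi$-objects through which maps to and from $A$, $\phi A$ factor (Lemma~\ref{triangler1}). So I would show: there is an arrow $A \to \phi A$ or $\phi A \to A$ coming from the almost split triangle, together with an arrow from each $\Pi(\Delta_i)$ back to the appropriate $\alpha$-object and an arrow from the appropriate $\alpha$-object out to $\Pi(\Delta_i)$; and that these four arrows together with the two edges into the $\mathcal A$-triangles at the $\star$-vertices form exactly the displayed quiver (a $4$-cycle with the two $\star$'s as the "outer" vertices and $A$, $\phi A$ as the two middle vertices), with no further arrows between the four vertices. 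The absence of extra arrows (in particular of a $2$-cycle between $A$ and $\phi A$) follows from rigidity of $T$ and Todorov's no-$2$-cycles result cited after Lemma~\ref{triangler2}; the absence of a direct arrow between the two $\star$-vertices follows because any such map would factor through the $\Pi$'s by Lemma~\ref{triangler1}, hence is not irreducible in $B$. Finally the relations are forced by Theorem~\ref{theorem.relations}: the $4$-cycle is oriented, so by that theorem it carries a commutativity relation, which is exactly the relation $A \to \phi A \to \star \to A$ equals (a scalar times) the other path, as in the picture.

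The main obstacle I anticipate is the bookkeeping needed to verify that $T_1 = A \oplus \phi A$ really does force the two triangles $\Delta_1, \Delta_2$ to be glued at the marked ("top") vertex in the way the picture shows, rather than in some other configuration — i.e.\ identifying which summand of $T_2$ maps to which $\alpha$-object, and checking that $\Pi(\Delta_i)$ is genuinely the connecting vertex of $\mathcal A$-triangle $i$ in $Q_B$ and not merely an interior vertex. This is where the explicit AR-quiver geometry of type $D_n$ (and the Hom-vanishing statements in the definition of $\mathcal B_*(A)$, $_*\mathcal B(A)$) has to be used carefully; once the gluing is pinned down, the remaining verification that the quiver is exactly the $4$-cycle with two $\mathcal A$-triangles attached, and that the relation is the expected commutativity relation, is routine given Theorem~\ref{theorem.relations} and Corollary~\ref{triangelkorrespondanse}.
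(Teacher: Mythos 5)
There is a genuine gap here: you have misread the target quiver, and the arrow pattern you set out to establish is not the one in the statement (and is in fact impossible). In the displayed quiver the two unstarred vertices are the ones corresponding to $A$ and $\phi A$, and there is \emph{no} arrow between them; the arrow that closes the two $3$-cycles is the one from one $\star$-vertex to the other. Your plan asserts the opposite on both counts: you posit ``an arrow $A \to \phi A$ or $\phi A \to A$ coming from the almost split triangle'' and you argue for ``the absence of a direct arrow between the two $\star$-vertices.'' But $A$ and $\phi A$ both occur as summands of the \emph{middle} term of one and the same almost split triangle, so there is no irreducible map between them; indeed $\Hom_{\cC}(A,\phi A)=0=\Hom_{\cC}(\phi A,A)$, since they sit in the same $\tau$-slice of the two exceptional rows of $\mathbb{Z}D_n$. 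The quiver you would end up with ($\star\to A\to\star'$, $\star\to\phi A\to\star'$, a chord between $A$ and $\phi A$, and no arrow $\star'\to\star$) contains a non-oriented cycle and so cannot be the quiver of a cluster-tilted algebra; your proposed relation ``$A\to\phi A\to\star\to A$'' uses the non-existent arrow. A second problem is the claim that $T_2$ splits as a maximal rigid object in each of $\cB_*(A)$ and $_*\cB(A)$: since $A$ and $\phi A$ impose the same Ext-vanishing conditions on $\beta$-objects, $T_2$ is a cluster-tilting object in the single connected cluster category $\cB(A)$ of type $A_{n-2}$ and, by Lemma~\ref{alltid linear}, lives in \emph{one} triangle of order $n-2$, not necessarily $\cB_*(A)$ or $_*\cB(A)$; the two-disjoint-triangles decomposition you invoke is the mechanism of Lemma~\ref{triangler2}, i.e.\ of the case $A'\neq\phi A$ treated in Lemma~\ref{lemma.classify.23}, not of this one. (Also, $A^{\perp}_{\cC}$ has type $A_{n-1}$, not $A_{n-2}$.)

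The ingredient you are missing, and which the paper uses to bypass all of this bookkeeping, is the identity $\Hom_{\cC}(X,A)=\Hom_{\cC}(X,\phi A)$ and $\Hom_{\cC}(A,Y)=\Hom_{\cC}(\phi A,Y)$ for all other indecomposables $X,Y$. Granting this, $Q_B$ is obtained from the quiver of the image of $T$ in $A^{\perp}_{\cC}$ --- the quiver of a cluster-tilted algebra of type $A_{n-1}$, in which $\phi A$ is a connecting vertex --- simply by doubling the vertex $\phi A$. Doubling a connecting vertex lying on a $3$-cycle produces exactly two $3$-cycles sharing the arrow $\star'\to\star$, and doubling a connecting vertex with a single adjacent arrow produces the degenerate picture with one empty $\cA$-triangle. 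If you want to keep your route of reading off arrows via $\Pi$-objects and Lemma~\ref{triangler1}, you would first have to correct the target configuration as above and then still prove this Hom-identity (or something equivalent) to see that $A$ and $\phi A$ receive and emit the same arrows.
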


\begin{proof}
We know that $A^{\perp}$ is equivalent to $\mathcal C_{n-1}$. We also know that $\phi A$ is an object in the outmost $\tau$-orbit of $A^{\perp}$, and thus can be considered equivalent to the object induced by the projective-injective module for some  embedding of $\mod kQ_{n-1}$ in $A^{\perp}$, which in particular is $\phi A \cup \mathcal B$. Thus $\phi A$ corresponds to a connecting vertex $\bullet_{\phi A}$ in an $\mathcal A$-triangle $Q'$.

If there is exactly one arrow going into or out of $\bullet_{\phi A}$, the vertex this arrow comes from (or goes to) is again a connecting vertex $\star$ for the subquiver $Q' \setminus \bullet_{\phi A}$. It is easy to see from the definition of an $\mathcal A$-triangle that $Q' \setminus \bullet_{\phi A}$ is again an $\mathcal A$-triangle. Furthermore, since $\Hom_{\mathcal C}(X,A)=\Hom_{\mathcal C}(X,\phi A)$ and $\Hom_{\mathcal C}(A,Y)=\Hom_{\mathcal C}(\phi A,Y)$ for all $X,Y \in \mathcal C$, we see that $Q$ is of the desired type but with one empty $\cA$-triangle.

If $\bullet_{\phi A}$ is a vertex with exactly one arrow going into it and one going out of it in some $3$-cycle, it is easy to see that $Q' \setminus \bullet_{\phi A}$ is two $\mathcal A$-triangles connected by the remaining arrow of the previous $3$-cycle. By the same $\Hom$-argument as in the previous paragraph we can add a vertex corresponding to $A$ which is part of a $3$-cycle sharing one arrow with the $3$-cycle of $A'$ (since any map factoring through $A'$ factors through $A$). Thus  $Q$ is of the desired type and without empty $\cA$-triangles.
\end{proof}

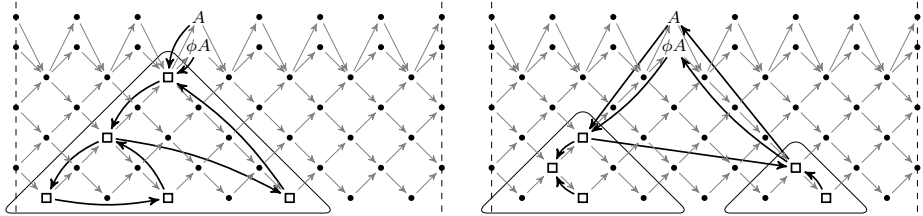
\begin{figure}[htb]
\[ \scalebox{.8} { \begin{tikzpicture}[scale=.5,yscale=-1]
 \foreach \x in {0,...,7}
  \foreach \y in {1,2,4,6}
   \node (\y-\x) at (\x*2,\y) [vertex] {};
 \foreach \x in {0,...,6}
  \foreach \y in {3,5,7}
   \node (\y-\x) at (\x*2+1,\y) [vertex] {};
 \replacevertex{(1-3)}{[tvertex] {$A$}}
 \replacevertex{(2-3)}{[tvertex] {$\phi A$}}
 \replacevertex{(3-2)}{[mvertex] {}}
 \replacevertex{(5-1)}{[mvertex] {}}
 \replacevertex{(7-0)}{[mvertex] {}}
 \replacevertex{(7-2)}{[mvertex] {}}
 \replacevertex{(7-4)}{[mvertex] {}}
 \draw [<-,thick,bend right=10] (7-2) to (7-0); 
 \draw [<-,thick,bend right=20] (7-0) to (5-1);
 \draw [<-,thick,bend right=20] (5-1) to (7-2);
 \draw [<-,thick,bend right=20] (5-1) to (3-2);
 \draw [<-,thick,bend left=20] (3-2) to (2-3);
 \draw [<-,thick,bend right=20] (3-2) to (1-3);
 \draw [<-,thick,bend right=10] (3-2) to (7-4);
 \draw [<-,thick,bend left=10] (7-4) to (5-1);
  \draw [rounded corners=5pt] (5,2) -- (10.5,7.5) -- (-.5,7.5) -- cycle;
 \foreach \xa/\xb in {0/1,1/2,2/3,3/4,4/5,5/6,6/7}
  \foreach \ya/\yb in {1/3,2/3,4/3,4/5,6/5,6/7}
   {
    \draw [->,color=black!50] (\ya-\xa) -- (\yb-\xa);
    \draw [->,color=black!50] (\yb-\xa) -- (\ya-\xb);
   }
 \draw [dashed] (0,.5) -- (0,7.5); 
 \draw [dashed] (14,.5) -- (14,7.5);
\end{tikzpicture} \quad
\begin{tikzpicture}[scale=.5,yscale=-1]
 \foreach \x in {0,...,7}
  \foreach \y in {1,2,4,6}
   \node (\y-\x) at (\x*2,\y) [vertex] {};
 \foreach \x in {0,...,6}
  \foreach \y in {3,5,7}
   \node (\y-\x) at (\x*2+1,\y) [vertex] {};
 \replacevertex{(1-3)}{[tvertex] {$A$}}
 \replacevertex{(2-3)}{[tvertex] {$\phi A$}}
 \replacevertex{(5-1)}{[mvertex] {}}
 \replacevertex{(6-5)}{[mvertex] {}}
 \replacevertex{(6-1)}{[mvertex] {}}
 \replacevertex{(7-1)}{[mvertex] {}}
 \replacevertex{(7-5)}{[mvertex] {}}
 \draw [<-,thick,bend right=20] (6-1) to (5-1); 
 \draw [<-,thick,bend left=20] (6-1) to (7-1);
 \draw [<-,thick] (5-1) to (1-3);
 \draw [<-,thick,bend left=10] (5-1) to (2-3);
 \draw [<-,thick] (1-3) to (6-5);
 \draw [<-,thick,bend left=10] (2-3) to (6-5);
 \draw [<-,thick] (6-5) to (5-1);
 \draw [<-,thick,bend right=20] (6-5) to (7-5);
  \draw [rounded corners=5pt] (3,4) -- (6.5,7.5) -- (-.5,7.5) -- cycle;
  \draw [rounded corners=5pt] (10,5) -- (12.5,7.5) -- (7.5,7.5) -- cycle;
 \foreach \xa/\xb in {0/1,1/2,2/3,3/4,4/5,5/6,6/7}
  \foreach \ya/\yb in {1/3,2/3,4/3,4/5,6/5,6/7}
   {
    \draw [->,color=black!50] (\ya-\xa) -- (\yb-\xa);
    \draw [->,color=black!50] (\yb-\xa) -- (\ya-\xb);
   }
 \draw [dashed] (0,.5) -- (0,7.5); 
 \draw [dashed] (14,.5) -- (14,7.5);
\end{tikzpicture} } \]
\caption{Diagrams of cluster-tilting objects as in Lemma~\ref{lemma.classify.1}.}
\end{figure}

\begin{lemma} \label{lemma.classify.23}
If $T_1 = A \oplus A'$ such that $A'$ is different from $\phi A$, then $Q_B$ is of the following type:
\[ \begin{tikzpicture}[scale=.5,yscale=-1]
 \node (P1) at (2,6) [inner sep=1pt] {$\star$};
 \node (P2) at (3,5) [vertex] {};
 \node (P3) at (3,7) [vertex] {};
 \node (P4) at (4,6) [inner sep=1pt] {$\star$};
 \draw [->] (P1) -- (P2);
 \draw [->] (P3) -- (P1);
 \draw [->] (P2) -- (P4);
 \draw [->] (P4) -- (P3);
\end{tikzpicture} \]
where the $\star$-vertices are connecting vertices of possibly empty $\mathcal A$-triangles.
\end{lemma}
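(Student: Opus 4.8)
The strategy mirrors that of Lemma~\ref{lemma.classify.1}, with Lemma~\ref{triangler2} playing the role that the coincidence of the Ext-supports of $A$ and $\phi A$ played there.

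\emph{Setting up the summands.} Since $A'\neq A,\phi A$, we have $A=\tau^k A'$ or $A=\tau^k(\phi A')$ for a unique $k$ with $0<k<n$, so Lemma~\ref{triangler2} applies with $A_1=A$, $A_2=A'$: the category $\cB(A)\cap\cB(A')$ is the disjoint union of triangles $\Delta_1,\Delta_2$ of orders $m_1=k-1$ and $m_2=n-k-1$ satisfying 1)--4), and applying the lemma with the roles of $A$ and $A'$ exchanged merely interchanges $\Delta_1$ and $\Delta_2$. By Lemma~\ref{An-delen_bevares} applied to $A$ and then to $A'$, every $\beta$-summand of $T$ lies in $\cB(A)\cap\cB(A')$, hence $T_2\subseteq\Delta_1\sqcup\Delta_2$ and $T_2=T_2^{(1)}\oplus T_2^{(2)}$ with $T_2^{(i)}$ rigid in $\Delta_i$. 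As $T$ has $n=2+m_1+m_2$ indecomposable summands while a rigid object in $\Delta_i$ has at most $m_i$ summands, each $T_2^{(i)}$ is maximal rigid in $\Delta_i$; by Corollary~\ref{triangelkorrespondanse} it is a tilting module over $kQ_{m_i}$ inducing an $\cA$-triangle $Q_i'$ in which the vertex of $\Pi(\Delta_i)$ --- an object of the outermost $\tau$-orbit of $\Delta_i$ --- is a connecting vertex (cf.\ the proposition preceding Corollary~\ref{triangelkorrespondanse}).

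\emph{Determining the arrows.} The vertex set of $Q_B$ is $\{a,a'\}\cup Q_1'\cup Q_2'$, where $a,a'$ are the vertices of $A,A'$, and the internal arrows of each $Q_i'$ are those of its $\cA$-triangle. There is no arrow between $Q_1'$ and $Q_2'$, since a map $\Delta_1\to\Delta_2$ factors through $A$ by 2) and a map $\Delta_2\to\Delta_1$ factors through $A'$ by symmetry. There is no arrow between $a$ and $a'$, since a map $A\to A'$ factors through $\Pi(\Delta_2)$ by 1) and a map $A'\to A$ through $\Pi(\Delta_1)$ by symmetry. For arrows joining $\{a,a'\}$ to $Q_i'$: by 4) and its symmetric version half of the relevant Hom-spaces vanish, while by Lemma~\ref{triangler1} every remaining map between $A$ or $A'$ and an object of $\Delta_i$ factors through $\Pi(\Delta_i)$; hence the only possible such arrows are $\Pi(\Delta_1)\to A$, $A\to\Pi(\Delta_2)$, $\Pi(\Delta_2)\to A'$, $A'\to\Pi(\Delta_1)$. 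Each is nonzero --- the nonvanishing of $A\to\Pi(\Delta_2)$ and the factorisation of $\Pi(\Delta_2)\to\Pi(\Delta_1)$ through $A'$, both established inside the proof of Lemma~\ref{triangler2}, force all four --- and each is irreducible, since the opposite composite vanishes either by 4) or by the absence of $2$-cycles (Todorov's theorem, as used in Lemma~\ref{triangler2}), while any factorisation through the remaining summands lies in $\mathrm{rad}^2$. This produces exactly the oriented $4$-cycle $\Pi(\Delta_1)\to A\to\Pi(\Delta_2)\to A'\to\Pi(\Delta_1)$ with $Q_1',Q_2'$ attached at the connecting vertices $\Pi(\Delta_1),\Pi(\Delta_2)$, which is the asserted shape. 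The relations inside each $Q_i'$ are inherited from the type-$A$ cluster-tilted algebra, and since the $4$-cycle admits no chord its only additional relation is the minimal zero-relation on the length-$3$ path, in accordance with Theorem~\ref{theorem.relations}.

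\emph{Degenerate cases and the main difficulty.} When $k=1$ (resp.\ $k=n-1$) the triangle $\Delta_1$ (resp.\ $\Delta_2$) is empty, the corresponding object $\Pi(\Delta_i)$ and $\cA$-triangle disappear, and the $4$-cycle collapses to the quiver obtained from the picture by deleting that $\star$-vertex; this is still of the stated type with one empty $\cA$-triangle, and is checked exactly as the matching subcase of Lemma~\ref{lemma.classify.1}. I expect the one genuinely delicate point to be the arrow analysis above --- confirming that each of the four cycle-maps is irreducible and that no further arrows occur --- which must be assembled from Lemmas~\ref{triangler1} and~\ref{triangler2} together with the no-$2$-cycle result rather than quoted from a single statement.
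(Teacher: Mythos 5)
Your proof is correct and follows the paper's own route: the paper's entire argument is the observation that Lemma~\ref{triangler2} places the $\beta$-part of $T$ into the two triangles $\Delta_1,\Delta_2$ (one of which is empty when $A'=\tau^{\pm1}\phi A$) and that Corollary~\ref{triangelkorrespondanse} then produces the attached $\cA$-triangles. Your arrow-by-arrow analysis via properties 1)--4) of Lemma~\ref{triangler2}, Lemma~\ref{triangler1} and the no-$2$-cycle result is a faithful expansion of the details the paper leaves implicit.
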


\begin{proof}
We are in the setup of Lemma~\ref{triangler2}. Note that if $A'=\tau^{\pm} \phi A$ then one of the $\Delta$'s is empty. The claim now follows from Corollary~\ref{triangelkorrespondanse}.
\end{proof}

\begin{figure}[htb]
\[ \scalebox{.8}{ \begin{tikzpicture}[scale=.5,yscale=-1]
 \foreach \x in {0,...,7}
  \foreach \y in {1,2,4,6}
   \node (\y-\x) at (\x*2,\y) [vertex] {};
 \foreach \x in {0,...,6}
  \foreach \y in {3,5,7}
   \node (\y-\x) at (\x*2+1,\y) [vertex] {};
 \replacevertex{(1-3)}{[tvertex] {$A$}}
 \replacevertex{(2-2)}{[tvertex] {$A'$}}
 \replacevertex{(3-2)}{[mvertex] {}}
 \replacevertex{(5-1)}{[mvertex] {}}
 \replacevertex{(6-1)}{[mvertex] {}}
 \replacevertex{(7-1)}{[mvertex] {}}
 \replacevertex{(7-4)}{[mvertex] {}}
 \draw [<-,thick,bend right=20] (6-1) to (5-1); 
 \draw [<-,thick,bend left=20] (6-1) to (7-1);
 \draw [<-,thick,bend right=20] (5-1) to (3-2);
 \draw [<-,thick,bend right=10] (3-2) to (7-4);
 \draw [<-,thick,bend left=10] (7-4) to (5-1);
 \draw [<-,thick,bend left=20] (2-2) to (3-2);
 \draw [<-,thick,bend right=20] (3-2) to (1-3);
  \draw [rounded corners=5pt] (5,2) -- (10.5,7.5) -- (-.5,7.5) -- cycle;
 \foreach \xa/\xb in {0/1,1/2,2/3,3/4,4/5,5/6,6/7}
  \foreach \ya/\yb in {1/3,2/3,4/3,4/5,6/5,6/7}
   {
    \draw [->,color=black!50] (\ya-\xa) -- (\yb-\xa);
    \draw [->,color=black!50] (\yb-\xa) -- (\ya-\xb);
   }
 \draw [dashed] (0,.5) -- (0,7.5); 
 \draw [dashed] (14,.5) -- (14,7.5);
\end{tikzpicture} \quad
\begin{tikzpicture}[scale=.5,yscale=-1]
 \foreach \x in {0,...,7}
  \foreach \y in {1,2,4,6}
   \node (\y-\x) at (\x*2,\y) [vertex] {};
 \foreach \x in {0,...,6}
  \foreach \y in {3,5,7}
   \node (\y-\x) at (\x*2+1,\y) [vertex] {};
 \replacevertex{(1-3)}{[tvertex] {$A$}}
 \replacevertex{(2-6)}{[tvertex] {$A'$}}
 \replacevertex{(6-1)}{[mvertex] {}}
 \replacevertex{(5-4)}{[mvertex] {}}
 \replacevertex{(7-0)}{[mvertex] {}}
 \replacevertex{(7-4)}{[mvertex] {}}
 \replacevertex{(6-5)}{[mvertex] {}}
 \draw [<-,thick,bend right=20] (7-0) to (6-1); 
 \draw [<-,thick] (6-1) to (1-3);
 \draw [<-,thick] (1-3) to (5-4);
 \draw [<-,thick,bend right=20] (5-4) to (6-5);
 \draw [<-,thick,bend left=20] (7-4) to (6-5);
 \draw [<-,thick,bend right=10] (5-4) to (2-6);
 \draw [<-,thick,bend left=90] (2-6) to (6-1);
  \draw [rounded corners=5pt] (2,5) -- (4.5,7.5) -- (-.5,7.5) -- cycle;
  \draw [rounded corners=5pt] (9,4) -- (12.5,7.5) -- (5.5,7.5) -- cycle;
 \foreach \xa/\xb in {0/1,1/2,2/3,3/4,4/5,5/6,6/7}
  \foreach \ya/\yb in {1/3,2/3,4/3,4/5,6/5,6/7}
   {
    \draw [->,color=black!50] (\ya-\xa) -- (\yb-\xa);
    \draw [->,color=black!50] (\yb-\xa) -- (\ya-\xb);
   }
 \draw [dashed] (0,.5) -- (0,7.5); 
 \draw [dashed] (14,.5) -- (14,7.5);
\end{tikzpicture} } \]
\caption{Diagrams of cluster-tilting objects as in Lemma~\ref{lemma.classify.23}. In the left example, one triangle is empty, and hence disappears.}
\end{figure}
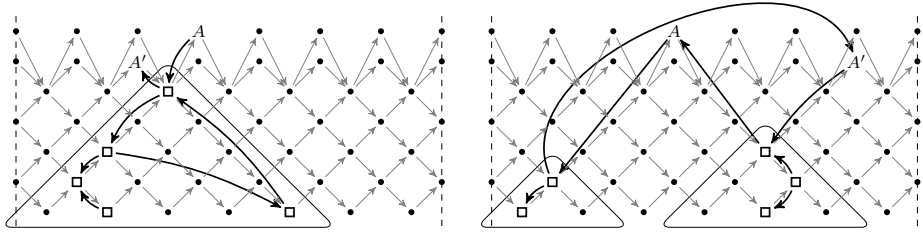

\begin{lemma}  \label{lemma.classify.4}
If $T_1 = A_1 \oplus A_2 \oplus \ldots \oplus A_n$ where $n \geq 3$, then $Q_B$ is of the following type:
\[ \begin{tikzpicture}[scale=.5,yscale=-1]
 \node (P1) at (2,9.4) [vertex] {};
 \node (P2) at (3.2,8.7) [vertex] {};
 \node (P3) at (4.4,9.4) [vertex] {};
 \node (P4) at (4.4,10.8) [vertex] {};
 \node (P5) at (3.2,11.5) [vertex] {};
 \node (P6) at (2,8) [inner sep=1pt] {$\star$};
 \node (P7) at (4.4,8) [inner sep=1pt] {$\star$};
 \node (P8) at (5.6,10.1) [inner sep=1pt] {$\star$};
 \node (P9) at (4.4,12.2) [inner sep=1pt] {$\star$};
 \draw [->] (P2) -- (P1);
 \draw [->] (P3) -- (P2);
 \draw [->] (P4) -- (P3);
 \draw [->] (P5) -- (P4);
 \draw [->] (P1) -- (P6);
 \draw [->] (P6) -- (P2);
 \draw [->] (P2) -- (P7);
 \draw [->] (P7) -- (P3);
 \draw [->] (P3) -- (P8);
 \draw [->] (P8) -- (P4);
 \draw [->] (P4) -- (P9);
 \draw [->] (P9) -- (P5);
 \draw [thick, loosely dotted] (P1) .. controls (1.4,10.45) and (2,11.5) .. (P5);
\end{tikzpicture} \]
where the $\star$-vertices are connecting vertices of possibly empty $\mathcal A$-triangles.

\end{lemma}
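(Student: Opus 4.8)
The plan is to run the same argument as in the proof of Lemma~\ref{lemma.classify.23}, but cyclically around the outer two rows of the AR-quiver rather than just for a single pair of $\alpha$-objects. First I would fix a cyclic order $A_1,A_2,\dots,A_n$ ($n\ge 3$, indices read modulo $n$) on the $\alpha$-object summands of $T$, chosen so that each consecutive pair $(A_i,A_{i+1})$ is in the situation of Lemma~\ref{triangler2} --- i.e.\ $A_i\in\{\tau^{k_i}A_{i+1},\tau^{k_i}\phi A_{i+1}\}$ for a suitable $k_i$ --- and so that no summand of $T$ lies in the outer rows strictly between $A_i$ and $A_{i+1}$. Such an order exists because any $\alpha$-object is obtained from any other by a power of $\tau$ possibly composed with $\phi$. (If some $A_{i+1}=\phi A_i$ one treats that pair as a single slot, using $\Hom_\cC(X,A_i)=\Hom_\cC(X,\phi A_i)$ and $\Hom_\cC(A_i,Y)=\Hom_\cC(\phi A_i,Y)$ as in the proof of Lemma~\ref{lemma.classify.1}.) Applying Lemma~\ref{triangler2} to $(A_i,A_{i+1})$ singles out, among the two triangles it produces, the one sitting \emph{between} $A_i$ and $A_{i+1}$; call it $\Delta_i$, of order $k_i-1$.

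Next I would show that the $\beta$-object part $T_2$ of $T$ (from Corollary~\ref{An-delen_bevares}) decomposes as $T_2=\bigoplus_{i=1}^n T_2^{(i)}$ with each $T_2^{(i)}$ a maximal rigid object of $\Delta_i$. Since $T$ is rigid we have $\Ext^1_\cC(T_2,A_i)=0$ for all $i$, so $T_2$ lies in $\bigcap_i \cB(A_i)$, and from the figures accompanying Lemma~\ref{triangler2} this intersection of $\beta$-strips is exactly the disjoint union $\coprod_i\Delta_i$ (the $\Delta_i$ are pairwise disjoint and cover all $\beta$-objects, because the $n$ gaps between consecutive $\alpha$-objects tile the outer rows; this is where $n\ge3$ and the cyclic choice are used). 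Removing $A_1,\dots,A_n$ from $T$ one at a time and applying Corollary~\ref{corollary.sametilt} repeatedly, the remaining object $T_2$ is a cluster-tilting object in $\coprod_i\Delta_i$, hence restricts on each $\Delta_i$ to a maximal rigid object; by Corollary~\ref{triangelkorrespondanse} this $T_2^{(i)}$ is induced by a tilting module over $kQ_{k_i-1}$ and therefore has $\Pi(\Delta_i)$ as a summand. By the same corollary and the preceding proposition, the full subquiver of $Q_B$ supported on $T_2^{(i)}$ is an $\cA$-triangle $Q_i$ whose marked connecting vertex is $c_i:=\Pi(\Delta_i)$.

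It remains to identify the arrows of $Q_B$ between the vertices $a_i$ of the $A_i$ and the $c_i$. Lemma~\ref{triangler1} shows that any map into or out of $A_i$ meeting a triangle in $\cB(A_i)$ factors through that triangle's projective-injective object, and together with Lemma~\ref{triangler2}(4) this forces each $Q_i$ to be attached to the rest of $Q_B$ only at $c_i$, via $A_i$ and $A_{i+1}$. Lemma~\ref{triangler2}(1) says every nonzero map $A_i\to A_{i+1}$ factors through $\Pi(\Delta_i)$, giving arrows $a_i\to c_i$ and $c_i\to a_{i+1}$ and no arrow $a_i\to a_{i+1}$; parts (2)--(3), applied to $(A_i,A_{i+1})$ (whose ``large'' triangle absorbs all the other $\Delta_j$), produce the chord $a_{i+1}\to a_i$ and rule out arrows between the $c_i$ or between non-consecutive $a_i$. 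Hence the $a_i$ form an oriented $n$-cycle, each consecutive pair carries a $3$-cycle $a_i\to c_i\to a_{i+1}\to a_i$ with the $\cA$-triangle $Q_i$ hanging off $c_i$, and there are no further arrows: this is exactly the quiver of type~3). Any potential $2$-cycle is excluded by Todorov's result (\cite[Proposition~3.2]{BMR3}), as in the proof of Lemma~\ref{triangler2}.

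The main obstacle I expect is the bookkeeping in the last step: keeping track of which of the two triangles of Lemma~\ref{triangler2} is the ``gap'' triangle $\Delta_i$ and which is the large one when several pairs are treated simultaneously, checking that the maps exhibited really give \emph{irreducible} morphisms in $\add T$ (so that they are arrows of $Q_B$ and do not factor through other summands), and dealing cleanly with the degenerate cases --- a trivial $\Delta_i$, or $A_{i+1}=\phi A_i$ --- in which Lemma~\ref{triangler2} does not apply verbatim and one must fall back on the $\Hom$-identifications used for type~1). An alternative route, should this become unwieldy, is to induct on $n$ by merging an adjacent $\phi$-pair or by peeling off one $\cA$-triangle and invoking Lemmas~\ref{lemma.classify.1} and \ref{lemma.classify.23} as base cases.
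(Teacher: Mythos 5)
Your proposal follows essentially the same route as the paper: iterate Lemma~\ref{triangler2} to split $\bigcap_i \cB(A_i)$ into $n$ (possibly empty) disjoint triangles, place the $\beta$-part of $T$ as maximal rigid objects in them via Corollaries~\ref{corollary.sametilt} and~\ref{triangelkorrespondanse}, and read off the $n$-cycle with attached $3$-cycles and $\cA$-triangles from the factorization statements of Lemmas~\ref{triangler1} and~\ref{triangler2}. The paper's own argument is in fact terser than yours---it simply asserts the $n$-cycle and that ``the relations also follow from Lemma~\ref{triangler2}''---so your extra care with the decomposition of $T_2$ and the degenerate cases is fine, and your one soft spot (deriving the chord $a_{i+1}\to a_i$ from parts (2)--(3) of Lemma~\ref{triangler2}, whose $\Pi(\Delta_1)$ for the pair $(A_i,A_{i+1})$ need not itself be a summand of $T$) is glossed over to the same degree in the original.
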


\begin{proof}
Assume that $T_1 = A_1 \oplus A_2 \oplus \ldots \oplus A_n$ where $n \geq 3$, and that $A_t = \tau^{-s_t}A_1$ or $\tau^{-s_t}(\phi A_1)$ for $2 \leq t\leq n$. Then $\bigcap_{1\leq s\leq t}\mathcal B(A_s)$ is a union of $n$ disjoint, possibly empty, triangles in $\mathcal B(A_1)$ by Lemma~\ref{triangler2}. If $A_i = \tau^{-1}(\phi A_{i+1})$ for some $i$, the corresponding triangle is empty. In particular there will be an $n$-cycle 
\[A_1 \rightarrow A_2 \rightarrow A_3 \rightarrow \cdots \rightarrow A_n \rightarrow A_1 \]
where the arrow $A_r \rightarrow A_{r+1}$ has a triangle of order $k-1$ "attached" if $A_{r+1} = \tau^{-k}A_r$:
\[ \begin{tikzpicture}[yscale=-1]
 \node (S) at (0,0) [inner sep=1pt] {$\star$};
 \node (A1) at (240:1.5) {$A_r$};
 \node (A2) at (300:1.5) {$A_{r+1}$};
 \draw [->] (S) -- (A1);
 \draw [->] (A1) -- (A2);
 \draw [->] (A2) -- (S);
 \draw [rounded corners=5pt] (0,-.5) -- (1.5,1) -- (-1.5,1) -- cycle;
\end{tikzpicture} \]
The relations also follow from Lemma~\ref{triangler2}. Now we are done by Corollary~\ref{triangelkorrespondanse}. 
\end{proof}

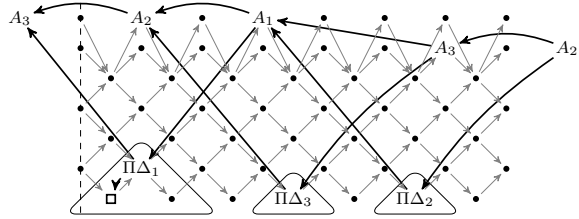
\begin{figure}[htb]
\[ \scalebox{.8}{ \begin{tikzpicture}[scale=.5,yscale=-1]
 \foreach \x in {0,...,7}
  \foreach \y in {1,2,4,6}
   \node (\y-\x) at (\x*2,\y) [vertex] {};
 \foreach \x in {0,...,6}
  \foreach \y in {3,5,7}
   \node (\y-\x) at (\x*2+1,\y) [vertex] {};
 \replacevertex{(1-3)}{[tvertex] {$A_1$}}
 \replacevertex{(1-1)}{[tvertex] {$A_2$}}
 \replacevertex{(2-6)}{[tvertex] {$A_3$}}
 \replacevertex{(6-1)}{[tvertex] {$\Pi\Delta_1$}}
 \replacevertex{(7-3)}{[tvertex] {$\Pi\Delta_3$}}
 \replacevertex{(7-5)}{[tvertex] {$\Pi\Delta_2$}}
 \replacevertex{(7-0)}{[mvertex] {}}
 \node (A1) at (16,2) [tvertex] {$A_2$};
 \node (A2) at (-2,1) [tvertex] {$A_3$};
 \draw [<-,thick,bend right=20] (7-0) to (6-1); 
 \draw [<-,thick] (6-1) to (1-3);
 \draw [<-,thick,bend right=10] (7-3) to (2-6);
 \draw [<-,thick,bend right=10] (7-5) to (A1);
 \draw [<-,thick] (A2) to (6-1);
 \draw [<-,thick] (1-3) to (7-5);
 \draw [<-,thick] (1-1) to (7-3);
 \draw [<-,thick,bend right=20] (A2) to (1-1);
 \draw [<-,thick,bend right=20] (1-1) to (1-3);
 \draw [<-,thick] (1-3) to (2-6);
 \draw [<-,thick,bend right=20] (2-6) to (A1);
  \draw [rounded corners=5pt] (2,5) -- (4.5,7.5) -- (-.5,7.5) -- cycle;
  \draw [rounded corners=5pt] (7,6) -- (8.5,7.5) -- (5.5,7.5) -- cycle;
  \draw [rounded corners=5pt] (11,6) -- (12.5,7.5) -- (9.5,7.5) -- cycle;
 \foreach \xa/\xb in {0/1,1/2,2/3,3/4,4/5,5/6,6/7}
  \foreach \ya/\yb in {1/3,2/3,4/3,4/5,6/5,6/7}
   {
    \draw [->,color=black!50] (\ya-\xa) -- (\yb-\xa);
    \draw [->,color=black!50] (\yb-\xa) -- (\ya-\xb);
   }
 \draw [dashed] (0,.5) -- (0,7.5); 
\end{tikzpicture} } \]
\caption{Diagram of a cluster-tilting object as in Lemma~\ref{lemma.classify.4}.}
\end{figure}

This finishes the proof of Theorem \ref{main_theorem}. Also note that we obtained a new proof of the main result of \cite{Dagfinn}.

\begin{remark}
The cluster category of type $D_{n}$ has also been modeled geometrically as a polygon with $n$ vertices and a puncture in its center in \cite{Schiffler}. Here the cluster-tilting objects correspond to triangulations of the polygon. Some results in this subsection could be proved using this approach. One can, for instance, derive Lemma~\ref{lemma.alpha-objects} from the fact that the $\alpha$-objects correspond to arcs in the punctured polygon that are incident to the puncture. Every triangulation contains at least two arcs incident to the puncture and thus every cluster tilting object has at least two $\alpha$-objects as summands\footnote{We want to thank Ralf Schiffler for this remark.}. 
\end{remark}

\end{document}